\providecommand{\U}[1]{\protect\rule{.1in}{.1in}}
\newtheorem{theorem}{Theorem}
\newtheorem{definition}[theorem]{Definition}
\newtheorem{lemma}[theorem]{Lemma}
\newtheorem{proposition}[theorem]{Proposition}
\newtheorem{remark}[theorem]{Remark}
\newcommand\supp{\mathop{\rm supp}}
\newcommand\esssup{\mathop{\rm ess \, sup}}
\newcommand\essinf{\mathop{\rm ess \, inf}}
\begin{document}

\title[Estimates for Riesz potential on weighted variable Hardy spaces]{Estimates for Riesz potential on weighted variable Hardy spaces revisited}
\author{Pablo Rocha}
\address{Departamento de Matem\'atica, Universidad Nacional del Sur, Bah\'{\i}a Blanca, 8000 Buenos Aires, Argentina.}
\email{pablo.rocha@uns.edu.ar}
\thanks{\textbf{Key words and phrases}: weighted variable Hardy spaces, atomic decomposition, maximal operators, vector-valued inequalities, Riesz potential}
\thanks{\textbf{2.020 Math. Subject Classification}: 42B30, 42B25, 42B35, 46E30}

\begin{abstract}
In [Math. Ineq. \& appl., Vol 26 (2) (2023), 511-530] and [Period. Math. Hung., 89 (1) (2024), 116-128], the present author proved that the Riesz potential $I_{\alpha}$ extends to a bounded operator $H^{p(\cdot)}_{\omega}(\mathbb{R}^n) \to L^{q(\cdot)}_{\omega}(\mathbb{R}^n)$ and 
$H^{p(\cdot)}_{\omega}(\mathbb{R}^n) \to H^{q(\cdot)}_{\omega}(\mathbb{R}^n)$ respectively, under the following two assumptions:

$A1)$ $\omega \in \mathcal{W}_{q(\cdot)}$ with $q(\cdot) \in \mathcal{P}^{\log}(\mathbb{R}^{n})$ and $\frac{1}{p(\cdot)} := \frac{1}{q(\cdot)} + \frac{\alpha}{n}$;

$A2)$ for every cube $Q \subset \mathbb{R}^{n}$, 
$\| \chi_Q \|_{L^{q(\cdot)}_{\omega}} \approx |Q|^{-\alpha/n} \| \chi_Q \|_{L^{p(\cdot)}_{\omega}}$. \\
In this note, we re-establish such estimates for $I_{\alpha}$ without assuming the hypothesis $A2)$. These proofs are simpler than the previous ones. 
\end{abstract}

\maketitle

\section{Introduction}

Kwok-Pun Ho in \cite{Ho} developed the weighted theory for variable Hardy spaces on $\mathbb{R}^n$, which are denoted by $H^{p(\cdot)}_{\omega}(\mathbb{R}^{n})$. He established the atomic decompositions for $H^{p(\cdot)}_{\omega}(\mathbb{R}^{n})$ and also gave a maximal characterization for these spaces. Moreover, he revealed some intrinsic structures of atomic decomposition for Hardy type spaces. His results generalize the infinite atomic decompositions obtained in \cite{Bui, Garcia, Nakai, Stein, Torch}.

The \textit{Riesz potential} $I_{\alpha }$ of order $\alpha \in (0, n)$ is defined, say on $\mathcal{S}(\mathbb{R}^{n})$, by
\begin{equation} \label{Ia}
I_{\alpha }f(x)=\int_{\mathbb{R}^{n}} f(y) |x-y|^{\alpha - n}dy, \,\,\, x \in \mathbb{R}^{n}.
\end{equation}
With respect to the behavior of the operator $I_{\alpha}$ on Hardy type spaces, E. Stein and G. Weiss \cite{steinweiss} proved the 
$H^{p}(\mathbb{R}^n) \to H^{q}(\mathbb{R}^n)$ boundedness of $I_{\alpha}$ for $\frac{n-1}{n} < p \leq 1$ and 
$\frac{1}{q} = \frac{1}{q} - \frac{\alpha}{n}$.  The lower bound $\frac{n-1}{n}$ is because these authors described the $H^p$ theory in terms of systems of conjugate harmonic functions (see \cite[\S 5.16]{Stein}). Afterwards, M. Taibleson and G. Weiss \cite{T-W} obtained, using a molecular decomposition for elements in $H^p$, the boundedness of the Riesz potential $I_{\alpha }$ from 
$H^{p}(\mathbb{R}^{n})$ into $H^{q}(\mathbb{R}^{n})$ for $0 < p \leq 1$ and $\frac{1}{q}=\frac{1}{p}-\frac{\alpha }{n}$; independently S. Krantz obtained the same result in \cite{krantz}. The $H^p(w^p) \to H^q(w^q)$ boundedness for $I_{\alpha}$ was proved by J. Str\"omberg and R. Wheeden \cite{wheeden} (see also \cite{Gatto, Rocha3}). P. Rocha and M. Urciuolo \cite{Rocha-Ur} established the $H^{p(\cdot)} \to H^{q(\cdot)}$ boundedness of $I_{\alpha}$, where the exponents $p(\cdot)$ and $q(\cdot)$ are related by 
$\frac{1}{p(\cdot)} - \frac{1}{q(\cdot)} = \frac{\alpha}{n}$ and $p(\cdot) \in \mathcal{P}^{\log}(\mathbb{R}^{n})$ (see also \cite{Rocha2}). 

In \cite{Rocha4} and \cite{Rocha5}, the present author proved that the operator $I_{\alpha}$ extends to a bounded operator 
$H^{p(\cdot)}_{\omega}(\mathbb{R}^{n}) \to L^{q(\cdot)}_{\omega}(\mathbb{R}^{n})$ and $H^{p(\cdot)}_{\omega}(\mathbb{R}^{n}) \to H^{q(\cdot)}_{\omega}(\mathbb{R}^{n})$ respectively, for $\frac{1}{p(\cdot)} := \frac{1}{q(\cdot)} + \frac{\alpha}{n}$; under the following two assumptions:

$A1)$ $\omega \in \mathcal{W}_{q(\cdot)}$ with $q(\cdot) \in \mathcal{P}^{\log}(\mathbb{R}^{n})$ (see Definitions \ref{pesos Wp} and \ref{Plog} below); and

$A2)$ for every cube $Q \subset \mathbb{R}^{n}$, 
$\| \chi_Q \|_{L^{q(\cdot)}_{\omega}} \approx |Q|^{-\alpha/n} \| \chi_Q \|_{L^{p(\cdot)}_{\omega}}$. \\
More precisely, in \cite{Rocha4} we proved an "off-diagonal" version of the Fefferman-Stein vector-valued maximal inequality on weighted variable Lebesgue spaces. Then, by means of the atomic decomposition established in \cite{Ho} and \cite[Lemma 5.4]{Ho}, together with $A1)$ and $A2)$ we obtained the $H^{p(\cdot)}_{\omega}(\mathbb{R}^{n}) \to L^{q(\cdot)}_{\omega}(\mathbb{R}^{n})$ boundedness of $I_{\alpha}$. In \cite{Rocha5}, we proved a molecular reconstruction theorem for $H^{p(\cdot)}_{\omega}(\mathbb{R}^n)$. This result, the atomic decomposition for $H^{p(\cdot)}_{\omega}(\mathbb{R}^{n})$, $A1)$ and $A2)$ allowed us to obtain the $H^{p(\cdot)}_{\omega}(\mathbb{R}^{n}) \to H^{q(\cdot)}_{\omega}(\mathbb{R}^{n})$ boundedness of $I_{\alpha}$.

We point out that if $q(\cdot) \in \mathcal{P}^{\log}(\mathbb{R}^{n})$ and $\omega \equiv 1$, then the condition $A2)$ holds. This was observed in \cite{Rocha-Ur}. In \cite{Rocha4}, the author gave non trivial examples of power weights satisfying $A2)$. So, the condition $A2)$ is an admissible hypothesis (see Remark \ref{hyp A2} below).

The purpose of this article is to re-establish such estimates for the Riesz potential $I_{\alpha}$ without assuming the hypothesis $A2)$. To avoid the assumption $A2)$ (and the use of \cite[Lemma 5.4]{Ho}), we will follow and adapt some ideas of the article \cite{Uribe} to our context. More precisely, in Section 3, we establish the following two vector-valued inequalities in the weighted variable setting:
\begin{equation} \label{ineq 1}
\left\Vert  \sum_{j=1}^{\infty}  h_j \right\Vert_{L^{q(\cdot)}_{\omega}(\mathbb{R}^n)} \lesssim 
\left\Vert \sum_{j=1}^{\infty} \left( \frac{1}{|Q_j|}\int_{Q_j} h_j^{q_0} \right)^{1/q_0} \chi_{Q_j}  \right\Vert_{L^{q(\cdot)}_{\omega}(\mathbb{R}^n)},
\end{equation}
where the $Q_j$'s are cubes, $\supp(h_j) \subset Q_j$, $q(\cdot)$ is an exponent, $q_0 > \max \{ 1, q_{+} \}$, and $\omega$ belongs to the weights class $\mathcal{W}_{q(\cdot)}$; and
\begin{equation} \label{ineq 2}
\left\Vert  \sum_{j=1}^{\infty} \lambda_j |Q_j|^{\frac{\alpha}{n}} \chi_{Q_j} \right\Vert_{L^{q(\cdot)}_{\omega}(\mathbb{R}^n)} \lesssim
\left\Vert \sum_{j=1}^{\infty} \lambda_j \chi_{Q_j}  \right\Vert_{L^{p(\cdot)}_{\omega}(\mathbb{R}^n)},
\end{equation}
where $0 < \alpha < n$, $\lambda_j > 0$, $q(\cdot)$ is an exponent, $\frac{1}{p(\cdot)} := \frac{1}{q(\cdot)} + \frac{\alpha}{n}$, and 
$\omega \in \mathcal{W}_{q(\cdot)}$. Inequalities of the type (\ref{ineq 1}) and (\ref{ineq 2}), for $q(\cdot) \equiv$ costant and certain weights $\omega$, were first considered by Grafakos and Kalton \cite{Grafakos}, and Str\"omberg and Wheeden \cite{wheeden}, respectively. The inequalities (\ref{ineq 1}) and (\ref{ineq 2}) generalize to the ones given in \cite[Lemma 4.9]{Uribe} and \cite[Lemma 4.11]{Uribe}. Then, with (\ref{ineq 1}) and (\ref{ineq 2}) together with some results of Sections 2 and 3, we will prove the following theorem in Section 4.

\

{\bf Theorem \ref{Hpw-Lqw and Hpw-Hqw}.} \textit{Let $0 < \alpha < n$, $q(\cdot) \in \mathcal{P}^{\log}(\mathbb{R}^{n})$ with 
$0 < q_{-} \leq q_{+} < \infty$, and $\omega \in \mathcal{W}_{q(\cdot)}$. If $\frac{1}{p(\cdot)} := \frac{1}{q(\cdot)} + \frac{\alpha}{n}$, then the Riesz potential $I_{\alpha}$ given by (\ref{Ia}) can be extended to a bounded operator $H^{p(\cdot)}_{\omega}(\mathbb{R}^{n}) \to L^{q(\cdot)}_{\omega}(\mathbb{R}^{n})$ and $H^{p(\cdot)}_{\omega}(\mathbb{R}^{n}) \to H^{q(\cdot)}_{\omega}(\mathbb{R}^{n})$.}

\

Instead of the molecular decomposition given in \cite{Rocha5}, here we will use the maximal characterization of 
$H^{p(\cdot)}_{\omega}(\mathbb{R}^{n})$ established in \cite{Ho} to obtain the 
$H^{p(\cdot)}_{\omega}(\mathbb{R}^{n}) \to H^{q(\cdot)}_{\omega}(\mathbb{R}^{n})$ boundedness of $I_{\alpha}$.

\

\textbf{Notation:} The symbol $A \lesssim B$ stands for the inequality $A \leq cB$ for some constant $c$. The symbol $A \approx B$ 
stands for $B \lesssim A \lesssim B$. We denote by $Q( x, r)$ the cube centered at $x \in \mathbb{R}^{n}$ with side lenght 
$r$. Given $\gamma >0$ and a cube $Q = Q(x, r)$, we set $\gamma Q = Q(x, \gamma r)$. For a measurable subset 
$E \subset \mathbb{R}^{n}$ we denote $|E|$ and $\chi_E$ the Lebesgue measure of $E$ and the characteristic function 
of $E$ respectively. Given a real number $s \geq 0$, we write $\lfloor s \rfloor$ for the integer part of $s$. As usual we denote with 
$\mathcal{S}(\mathbb{R}^{n})$ the space of smooth and rapidly decreasing functions, with $\mathcal{S}'(\mathbb{R}^{n})$  the dual space. 
If $\beta$ is the multiindex $\beta=(\beta_1, ..., \beta_n)$, then $|\beta| = \beta_1 + ... + \beta_n$. 

Throughout this paper, $C$ will denote a positive constant, not necessarily the same at each occurrence.

\section{Preliminaries} 

For $0 \leq \alpha < n$, we define the \textit{fractional maximal operator} $M_{\alpha}$ by
\[
M_{\alpha}f(x) = \sup_{Q \ni x} |Q|^{\frac{\alpha}{n} - 1}\int_{Q} |f(y)| \, dy,
\]
where $f$ is a locally integrable function on $\mathbb{R}^{n}$ and the supremum is taken over all the cubes $Q$ containing $x$. For 
$\alpha=0$, we have that $M_0 = M$, where $M$ is the {\it Hardy-Littlewood maximal operator} on $\mathbb{R}^{n}$.

Let $p(\cdot) : \mathbb{R}^{n} \to (0, \infty)$ be a measurable function. Given a measurable set $E$, let
\[
p_{-}(E) = \essinf_{ x \in E } p(x), \,\,\,\, \text{and} \,\,\,\, p_{+}(E) = \esssup_{x \in E} p(x).
\]
When $E = \mathbb{R}^{n}$, we will simply write $p_{-} := p_{-}(\mathbb{R}^{n})$ and $p_{+} := p_{+}(\mathbb{R}^{n})$. We also define 
$\underline{p} := \min \{ 1, p_{-} \}$.

Given a measurable function $f$ on $\mathbb{R}^{n}$, define the modular $\rho$ associated with $p(\cdot)$ by
\[
\rho(f) = \int_{\mathbb{R}^{n}} |f(x)|^{p(x)} dx.
\]
We define the variable Lebesgue space $L^{p(\cdot)} = L^{p(\cdot)}(\mathbb{R}^{n})$ to be the set of all measurable functions $f$ such that, for some $\lambda > 0$, $\rho \left( f/\lambda \right) < \infty$. This becomes a quasi normed space when equipped with the Luxemburg norm
\[
\| f \|_{L^{p(\cdot)}} = \inf \left\{ \lambda > 0 : \rho \left( f/\lambda \right) \leq 1 \right\}.
\]

Given a weight $\omega$, i.e.: a locally integrable function on $\mathbb{R}^{n}$ such that $0 < \omega(x) < \infty$ almost everywhere, we define the weighted variable Lebesgue space $L^{p(\cdot)}_{\omega}$ as the set of all measurable functions 
$f : \mathbb{R}^{n} \to \mathbb{C}$ such that $\| f \omega \|_{L^{p(\cdot)}} < \infty$. If $f \in L^{p(\cdot)}_{\omega}$, we define 
its quasi-norm by
\begin{equation} \label{quasi}
\| f \|_{L^{p(\cdot)}_\omega} := \| f \omega \|_{L^{p(\cdot)}}. 
\end{equation}

The following result follows from the definition of the $L^{p(\cdot)}_{\omega}$-norm.

\begin{lemma} \label{potencia s}
Given a measurable function $p(\cdot) : \mathbb{H}^{n} \to (0, \infty)$ with $0 < p_{-} \leq p_{+} < \infty$ and a weight $\omega$, then 
\\
(i) $\| f \|_{L^{p(\cdot)}_{\omega}} \geq 0$ and $\| f \|_{L^{p(\cdot)}_{\omega}} = 0$ if and only if $f \equiv 0$ a.e.,
\\
(ii) $\| c f \|_{L^{p(\cdot)}_{\omega}} = |c| \| f \|_{L^{p(\cdot)}_{\omega}}$ for all $f \in L^{p(\cdot)}_{\omega}$ and all $c \in \mathbb{C}$,
\\
(iii) $\| f +  g \|_{L^{p(\cdot)}_{\omega}} \leq 2^{1/\underline{p} - 1}(\| f \|_{L^{p(\cdot)}_{\omega}} + \| g \|_{L^{p(\cdot)}_{\omega}})$ for all $f, g \in L^{p(\cdot)}_{\omega}$,
\\
(iv) $\| f \|_{L^{p(\cdot)}_{\omega}}^{s} = \| |f|^{s} \|_{L^{p(\cdot)/s}_{\omega^s}}$ for every $s > 0$.
\end{lemma}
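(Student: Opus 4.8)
The unifying observation is that, by the definition $\|f\|_{L^{p(\cdot)}_{\omega}}=\|f\omega\|_{L^{p(\cdot)}}$, each of the four claims reduces to a statement about the \emph{unweighted} Luxemburg functional (applied to $F:=f\omega$ in (i)--(iii), and to $|F|^{s}$ in (iv)), so the weight is never an independent obstacle. For (i), the quantity $\|F\|_{L^{p(\cdot)}}$ is an infimum of a nonempty set of positive numbers, hence $\ge 0$; and $f\equiv 0$ a.e. gives $\rho(F/\lambda)=0\le 1$ for every $\lambda>0$, so the infimum is $0$. For the converse I would argue by contraposition: if $|F|\ge\varepsilon$ on a set $E$ with $|E|>0$, then for $0<\lambda<\varepsilon$ one has $\rho(F/\lambda)\ge|E|\,(\varepsilon/\lambda)^{p_{-}}\to\infty$ as $\lambda\to 0^{+}$ (this is where $p_{-}>0$ enters), so $\rho(F/\lambda)\le 1$ fails for all small $\lambda$ and therefore $\|F\|_{L^{p(\cdot)}}>0$. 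Thus $\|f\|_{L^{p(\cdot)}_{\omega}}=0$ forces $F=0$ a.e., and since $\omega>0$ a.e. we get $f=0$ a.e. For (ii) with $c\ne 0$, the change of variable $\lambda=|c|\mu$ inside the defining infimum, together with $|cF/\lambda|=|F/\mu|$, yields $\|cF\|_{L^{p(\cdot)}}=|c|\,\|F\|_{L^{p(\cdot)}}$; the case $c=0$ is (i).

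I would prove (iv) next, both because it is a pure scaling identity and because it feeds the proof of (iii). Writing $\||f|^{s}\|_{L^{p(\cdot)/s}_{\omega^{s}}}=\||F|^{s}\|_{L^{p(\cdot)/s}}$ and applying the substitution $\lambda=\mu^{s}$ to the defining infimum $\inf\{\lambda>0:\int_{\mathbb{R}^{n}}|F|^{p(x)}\lambda^{-p(x)/s}\,dx\le 1\}$, the constraint becomes $\rho(F/\mu)\le 1$ and the infimum becomes $(\inf\{\mu>0:\rho(F/\mu)\le 1\})^{s}=\|F\|_{L^{p(\cdot)}}^{s}$. This is exactly (iv).

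The heart of the lemma is (iii), and my plan is to reduce it to a triangle inequality in a variable Lebesgue space whose exponent is everywhere at least $1$. Set $s:=\underline{p}=\min\{1,p_{-}\}\in(0,1]$, so that $r(\cdot):=p(\cdot)/s$ satisfies $r_{-}=p_{-}/\underline{p}\ge 1$ and $r_{+}=p_{+}/\underline{p}<\infty$. Using (iv), the pointwise subadditivity $|f+g|^{s}\le|f|^{s}+|g|^{s}$ (valid since $0<s\le 1$), the monotonicity of the norm, the triangle inequality for $\|\cdot\|_{L^{r(\cdot)}_{\omega^{s}}}$, and (iv) once more, I obtain the chain $\|f+g\|_{L^{p(\cdot)}_{\omega}}^{s}=\||f+g|^{s}\|_{L^{r(\cdot)}_{\omega^{s}}}\le\||f|^{s}+|g|^{s}\|_{L^{r(\cdot)}_{\omega^{s}}}\le\|f\|_{L^{p(\cdot)}_{\omega}}^{s}+\|g\|_{L^{p(\cdot)}_{\omega}}^{s}$. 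The quasi-triangle constant is then recovered from the scalar inequality $(a^{s}+b^{s})^{1/s}\le 2^{1/s-1}(a+b)$ for $a,b\ge 0$, which is Jensen's inequality for the concave function $t\mapsto t^{s}$ (equivalently $\tfrac{1}{2}(a^{s}+b^{s})\le(\tfrac{a+b}{2})^{s}$); taking $1/s$-th roots in the displayed chain and invoking this inequality yields precisely the factor $2^{1/\underline{p}-1}$.

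The only genuinely analytic ingredient, and hence the main obstacle, is the triangle inequality for $\|\cdot\|_{L^{r(\cdot)}_{\omega^{s}}}$ used above. After absorbing the weight via $\|h\|_{L^{r(\cdot)}_{\omega^{s}}}=\|h\omega^{s}\|_{L^{r(\cdot)}}$, this is exactly the Minkowski inequality for the unweighted space $L^{r(\cdot)}$ in the regime $r_{-}\ge 1$, whose proof rests on the convexity of the modular $\rho$ for exponents $\ge 1$ together with the unit-ball property $\|G\|_{L^{r(\cdot)}}\le 1\Leftrightarrow\rho(G)\le 1$ (available since $r_{+}<\infty$). I would cite this from the standard theory of variable Lebesgue spaces rather than reprove it; note that when $p_{-}\ge 1$ one has $\underline{p}=1$, the constant $2^{1/\underline{p}-1}$ equals $1$, and (iii) is nothing more than this Minkowski inequality applied directly to $F$ and $G$.
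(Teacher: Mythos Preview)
Your argument is correct in every part: the reduction to the unweighted Luxemburg norm via $F=f\omega$, the direct verifications of (i), (ii) and (iv) from the definition, and the derivation of (iii) by passing to the exponent $r(\cdot)=p(\cdot)/\underline{p}$ (where $r_{-}\ge 1$), invoking Minkowski's inequality for $L^{r(\cdot)}$, and then using the concavity inequality $(a^{s}+b^{s})^{1/s}\le 2^{1/s-1}(a+b)$ with $s=\underline{p}$.

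There is essentially nothing to compare: the paper does not give a proof of this lemma but simply states that ``the following result follows from the definition of the $L^{p(\cdot)}_{\omega}$-norm.'' Your write-up is a careful unpacking of exactly that remark, and the route you take---reducing (iii) to the Minkowski inequality in $L^{p(\cdot)/\underline{p}}$ and citing the standard variable-exponent theory for the latter---is the natural one and is consistent with what the paper presumes known.
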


For a measurable function $p(\cdot) : \mathbb{R}^{n} \to [1, \infty)$, its conjugate function $p'(\cdot)$ is defined by 
$\frac{1}{p(x)} + \frac{1}{p'(x)} = 1$. We have the following equivalent expression for the $L^{p(\cdot)}_{\omega}$-norm.

\begin{proposition} \label{norma equivalente}
Let $p(\cdot) : \mathbb{R}^{n} \to [1, \infty)$ be a measurable function and $\omega$ be a locally integrable function such that 
$0 < \omega(x) < \infty$ almost everywhere. Then
\[
\| f \|_{L^{p(\cdot)}_{\omega}} \approx \sup \left\{ \int_{\mathbb{R}^{n}} |f(x) g(x)| dx : \| g \|_{L^{p'(\cdot)}_{\omega^{-1}}} \leq 1
\right\}.
\]
\end{proposition}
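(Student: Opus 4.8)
The plan is to reduce the weighted associate-norm formula to its unweighted counterpart, which is classical for variable Lebesgue spaces. Recall the unweighted norm-conjugate formula: for any measurable exponent $p(\cdot) : \mathbb{R}^n \to [1,\infty)$ and any measurable $F$,
\[
\| F \|_{L^{p(\cdot)}} \approx \sup\left\{ \int_{\mathbb{R}^n} |F(x) h(x)|\, dx : \| h \|_{L^{p'(\cdot)}} \leq 1 \right\},
\]
with implicit constants depending only on $p(\cdot)$. This is a standard fact, following from the generalized H\"older inequality together with the construction of near-extremal functions for the modular (see, e.g., \cite{Uribe}). I would take this as the only external input.

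First I would apply the displayed formula to $F = f\omega$, so that by the definition \eqref{quasi},
\[
\| f \|_{L^{p(\cdot)}_{\omega}} = \| f\omega \|_{L^{p(\cdot)}} \approx \sup\left\{ \int_{\mathbb{R}^n} |f(x)\,\omega(x)\, h(x)|\, dx : \| h \|_{L^{p'(\cdot)}} \leq 1 \right\}.
\]
The next step is the substitution $g := h\omega$, equivalently $h = g\omega^{-1}$. Since $\omega$ satisfies $0 < \omega(x) < \infty$ for almost every $x$, multiplication by $\omega$ is an invertible map on the space of (equivalence classes of) measurable functions, so $h \mapsto h\omega$ is a bijection. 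Under this substitution the integrand is unchanged, $|f\omega h| = |f\omega\cdot g\omega^{-1}| = |fg|$ almost everywhere, and the constraint transforms according to $\| h \|_{L^{p'(\cdot)}} = \| g\omega^{-1} \|_{L^{p'(\cdot)}} = \| g \|_{L^{p'(\cdot)}_{\omega^{-1}}}$, again by \eqref{quasi}, now read with weight $\omega^{-1}$. Hence the two suprema, one over the unit ball of $L^{p'(\cdot)}$ and one over the unit ball of $L^{p'(\cdot)}_{\omega^{-1}}$, coincide, and combining the two displays yields the claimed equivalence.

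I do not expect a genuine obstacle here; the argument is essentially bookkeeping. The only points requiring care are that the unweighted formula be invoked for the full range $p(\cdot) : \mathbb{R}^n \to [1,\infty)$, where on the set $\{p=1\}$ one has $p'=\infty$ (the quoted version holds in this generality, with constants depending only on $p(\cdot)$), and that the correspondence $h \leftrightarrow g\omega^{-1}$ genuinely carry one constraint set onto the other. The latter is exactly where the hypothesis $0 < \omega < \infty$ a.e. is used, guaranteeing that $\omega^{-1}$ is a well-defined measurable function with $0 < \omega^{-1} < \infty$ a.e. and that the bijection preserves the suprema. Local integrability of $\omega$ is not needed for this particular argument; only $0 < \omega < \infty$ a.e. matters.
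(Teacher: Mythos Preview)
Your proposal is correct and is essentially the same approach as the paper's, which simply states that the proposition follows from \cite[Corollary 3.2.14]{Diening}; you have just made explicit the substitution $h\leftrightarrow g\omega^{-1}$ that reduces the weighted statement to the unweighted norm-conjugate formula. The only minor point is that your external input is better cited from \cite{Diening} (where Corollary~3.2.14 is exactly the unweighted associate-norm formula) rather than from \cite{Uribe}.
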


\begin{proof} The proposition follows from \cite[Corollary 3.2.14]{Diening}.
\end{proof}

Next, we introduce the weights used in \cite{Ho} to define weighted Hardy spaces with variable exponents.

\begin{definition} (See \cite[Remark 1]{Rocha4}) \label{pesos Wp}
Let $p(\cdot) : \mathbb{R}^{n} \to (0, \infty)$ be a measurable function with $0 < p_{-} \leq p_{+} < \infty$. We define
$\mathcal{W}_{p(\cdot)}$ as the set of all weights $\omega$ such that
\[
(i) \,\, \text{there exists} \,\, 0 < p_{\ast} < \min \{1, p_{-} \} \,\, \text{such that} \,\, 
\| \chi_Q \|_{L^{p(\cdot)/p_{\ast}}_{\omega^{p_{\ast}}}} < \infty, \,\,\, \text{and}
\]
\[ 
\| \chi_Q \|_{L^{(p(\cdot)/p_{\ast})'}_{\omega^{-p_{\ast}}}} < \infty, \,\,\, \text{for all cube} \,\, Q;
\]
\[
(ii) \,\, \text{there exist $\kappa > 1$ and $s > \max \{ 1, 1/p_{-} \}$ such that Hardy-Littlewood maximal}
\]
\[ 
\text{operator $M$ is bounded on $L^{(sp(\cdot))'/\kappa}_{\omega^{-\kappa/s}}$}. 
\]
\end{definition}

Before stating the definition of weighted variable Hardy spaces $H^{p(\cdot)}_{\omega}(\mathbb{R}^{n})$, we introduce two indices, which 
are related to the intrinsic structure of the atomic decomposition of $H^{p(\cdot)}_{\omega}(\mathbb{R}^{n})$ established in \cite{Ho}. Given 
$\omega \in \mathcal{W}_{p(\cdot)}$, we write
\[
s_{\omega, \, p(\cdot)} := \inf \left\{ s > \max \{ 1, 1/p_{-} \} : M \,\, \text{is bounded on} \,\, L^{(sp(\cdot))'}_{\omega^{-1/s}} \right\}
\]
and
\[
\mathbb{S}_{\omega, \, p(\cdot)} := \left\{ s > \max \{ 1, 1/p_{-} \} : M \,\, \text{is bounded on} \,\, L^{(sp(\cdot))'/\kappa}_{\omega^{-\kappa/s}} \,\,
\text{for some} \,\, \kappa > 1 \right\}.
\]
Then, for every fixed $s \in \mathbb{S}_{\omega, \, p(\cdot)}$, we define
\[
\kappa_{\omega, \, p(\cdot)}^{s} := \sup \left\{ \kappa > 1 : M \,\, \text{is bounded on} \,\, L^{(sp(\cdot))'/\kappa}_{\omega^{-\kappa/s}} \right\}.
\]
The index $\kappa_{\omega, \, p(\cdot)}^{s}$ is used to measure the left-openness of the boundedness of $M$ on the family 
$\left\{ L^{(sp(\cdot))'/\kappa}_{\omega^{-\kappa/s}} \right\}_{\kappa > 1}$. The index $s_{\omega, \, p(\cdot)}$ is related to the vanishing moment condition and the index $\kappa_{\omega, \, p(\cdot)}^{s}$ is related to the size condition of the atoms 
(see \cite[Theorems 5.3 and 6.3]{Ho}). 

\begin{proposition} (\cite[Proposition 3]{Rocha4})\label{WqcWp}
Let $0 < \alpha < n$ and let $q(\cdot) : \mathbb{R}^{n} \to (0, \infty)$ be a measurable function such that $0 < q_{-} \leq q_{+} < \infty$. If $\omega \in \mathcal{W}_{q(\cdot)}$ and $\frac{1}{p(\cdot)} := \frac{1}{q(\cdot)} + \frac{\alpha}{n}$, then 
$\omega \in \mathcal{W}_{p(\cdot)}$. Moreover, $s_{\omega, \, p(\cdot)} \leq s_{\omega, \, q(\cdot)} + \frac{\alpha}{n}$.
\end{proposition}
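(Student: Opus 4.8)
The plan is to verify directly, for the pair $(p(\cdot),\omega)$, the two conditions in Definition~\ref{pesos Wp}, deducing each from the corresponding condition for $(q(\cdot),\omega)$ by means of the pointwise relation $\frac{1}{p(x)}=\frac{1}{q(x)}+\frac{\alpha}{n}$. I will use freely that $p(\cdot)\le q(\cdot)$, that $\frac{1}{p_-}=\frac{1}{q_-}+\frac{\alpha}{n}$, and the $s$-power identity of Lemma~\ref{potencia s}(iv).

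\emph{Condition (i).} Let $q_{\ast}\in(0,\min\{1,q_-\})$ be the exponent furnished by Definition~\ref{pesos Wp}(i) for $q(\cdot)$, and define $p_{\ast}$ by $\frac{1}{p_{\ast}}:=\frac{1}{q_{\ast}}+\frac{\alpha}{n}$; since $q_{\ast}<\min\{1,q_-\}$ one checks at once that $0<p_{\ast}<\min\{1,p_-\}$. By Lemma~\ref{potencia s}(iv), $\|\chi_Q\|_{L^{p(\cdot)/p_{\ast}}_{\omega^{p_{\ast}}}}=\|\chi_Q\omega\|_{L^{p(\cdot)}}^{p_{\ast}}$, and similarly with $q$ in place of $p$; hence the first estimate in (i) for $p(\cdot)$ amounts to $\|\chi_Q\omega\|_{L^{p(\cdot)}}<\infty$, which follows from $\|\chi_Q\omega\|_{L^{q(\cdot)}}<\infty$ (given by (i) for $q(\cdot)$) and the elementary embedding $L^{q(\cdot)}(Q)\hookrightarrow L^{p(\cdot)}(Q)$, valid because $p(\cdot)\le q(\cdot)$ and $|Q|<\infty$ (see \cite{Diening}). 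For the second estimate, Lemma~\ref{potencia s}(iv) gives $\|\chi_Q\|_{L^{(p(\cdot)/p_{\ast})'}_{\omega^{-p_{\ast}}}}=\|\chi_Q\omega^{-1}\|_{L^{r(\cdot)}}^{p_{\ast}}$ with $\frac{1}{r(\cdot)}:=\frac{1}{p_{\ast}}-\frac{1}{p(\cdot)}$, and the analogous reformulation holds for $q(\cdot)$ with $\frac{1}{r_q(\cdot)}:=\frac{1}{q_{\ast}}-\frac{1}{q(\cdot)}$. By the choice of $p_{\ast}$, $\frac{1}{p_{\ast}}-\frac{1}{p(\cdot)}=\frac{1}{q_{\ast}}-\frac{1}{q(\cdot)}$, so $r(\cdot)=r_q(\cdot)$ and the second estimate for $p(\cdot)$ is literally the one for $q(\cdot)$, which holds by hypothesis. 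This proves (i).

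\emph{Condition (ii) and the index estimate.} Suppose $\kappa_0>1$ and $s_0>\max\{1,1/q_-\}$ are such that $M$ is bounded on $L^{(s_0 q(\cdot))'/\kappa_0}_{\omega^{-\kappa_0/s_0}}$. Put $s:=s_0+\frac{\alpha}{n}$ and $\kappa:=\kappa_0\,\frac{s}{s_0}=\kappa_0\bigl(1+\frac{\alpha}{ns_0}\bigr)$. A direct computation with reciprocals, inserting $\frac{1}{p(x)}=\frac{1}{q(x)}+\frac{\alpha}{n}$, shows $\frac{\kappa}{s}=\frac{\kappa_0}{s_0}$ and $\frac{(sp(\cdot))'}{\kappa}=\frac{(s_0 q(\cdot))'}{\kappa_0}$, so $L^{(sp(\cdot))'/\kappa}_{\omega^{-\kappa/s}}$ and $L^{(s_0 q(\cdot))'/\kappa_0}_{\omega^{-\kappa_0/s_0}}$ are the same space; since moreover $\kappa>\kappa_0>1$ and $s=s_0+\frac{\alpha}{n}>\max\{1,1/p_-\}$, condition (ii) for $p(\cdot)$ follows, and hence $\omega\in\mathcal{W}_{p(\cdot)}$. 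For the index inequality, take $s_0>s_{\omega,q(\cdot)}$ with $M$ bounded on $L^{(s_0 q(\cdot))'}_{\omega^{-1/s_0}}$ (the case $\kappa_0=1$), and set $s:=s_0+\frac{\alpha}{n}$, $\kappa:=s/s_0>1$; the same identity gives boundedness of $M$ on $L^{(sp(\cdot))'/\kappa}_{\omega^{-\kappa/s}}$. Writing $g=f^{1/\kappa}$ and using the convexity of $t\mapsto t^{\kappa}$ (so $(M(g^{\kappa}))^{1/\kappa}\ge Mg$ pointwise) together with Lemma~\ref{potencia s}(iv), one upgrades this to boundedness of $M$ on $L^{(sp(\cdot))'}_{\omega^{-1/s}}$. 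Thus $s=s_0+\frac{\alpha}{n}$ lies in the set defining $s_{\omega,p(\cdot)}$, whence $s_{\omega,p(\cdot)}\le s_0+\frac{\alpha}{n}$; letting $s_0\downarrow s_{\omega,q(\cdot)}$ yields $s_{\omega,p(\cdot)}\le s_{\omega,q(\cdot)}+\frac{\alpha}{n}$.

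The routine steps are the elementary inequalities for $p_{\ast}$, $s$, $\kappa$, the embedding $L^{q(\cdot)}(Q)\hookrightarrow L^{p(\cdot)}(Q)$, and the bookkeeping with Lemma~\ref{potencia s}(iv). The one point that really requires care — the crux of the argument — is the exponent identity $\frac{(sp(\cdot))'}{\kappa}=\frac{(s_0 q(\cdot))'}{\kappa_0}$ for $s=s_0+\frac{\alpha}{n}$, $\kappa=\kappa_0 s/s_0$: after passing to reciprocals and conjugates, the $\frac{1}{q(\cdot)}$-terms cancel precisely because $\frac{\kappa}{\kappa_0}=\frac{s}{s_0}$, while the constant terms match precisely because $s-\frac{\alpha}{n}=s_0$, so it is exactly the defining relation $\frac{1}{p(\cdot)}=\frac{1}{q(\cdot)}+\frac{\alpha}{n}$ that makes everything fit.
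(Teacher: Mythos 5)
Your proof is correct: both conditions of Definition \ref{pesos Wp} are verified for $(p(\cdot),\omega)$ via the substitutions $\frac{1}{p_{\ast}}=\frac{1}{q_{\ast}}+\frac{\alpha}{n}$, $s=s_0+\frac{\alpha}{n}$, $\kappa=\kappa_0 s/s_0$, and the exponent identities you single out as the crux ($r(\cdot)=r_q(\cdot)$, $\frac{\kappa}{s}=\frac{\kappa_0}{s_0}$, $\frac{(sp(\cdot))'}{\kappa}=\frac{(s_0q(\cdot))'}{\kappa_0}$) all check out, as does the Jensen upgrade from $\kappa>1$ to $\kappa=1$ for the index estimate. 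The paper gives no proof here, merely citing \cite[Proposition 3]{Rocha4}, but your direct verification is the natural argument and uses exactly the devices the paper employs elsewhere (Lemma \ref{potencia s}(iv) and the Jensen step in the proof of Lemma \ref{Qalfan}).
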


For a measurable function $p(\cdot) : \mathbb{R}^{n} \to (0, \infty)$ such that $0 < p_{-}\leq p_{+} < \infty$ and 
$\omega \in \mathcal{W}_{p(\cdot)}$, in \cite{Ho} the author give a variety of distinct approaches, based on
differing definitions, all lead to the same notion of weighted variable Hardy space $H^{p(\cdot)}_{\omega}$. 

We recall some terminologies and notations from the study of maximal functions. Given $N \in \mathbb{N}$ and 
$\phi \in \mathcal{S}(\mathbb{R}^n)$, define
\[
\| \phi \|_{\mathcal{S}(\mathbb{H}^{n}), \, N}:= \sum\limits_{\left\vert \mathbf{\beta }\right\vert \leq N}\sup\limits_{x\in \mathbb{R}^{n}}\left( 1+\left\vert x\right\vert \right) ^{N}\left\vert \partial^{
\mathbf{\beta }}\phi (x)\right\vert
\]
and
\[
\mathcal{F}_{N}=\left\{ \varphi \in \mathcal{S}(\mathbb{R}^{n}): \| \varphi \|_{\mathcal{S}(\mathbb{H}^{n}), \, N} \leq 1\right\}.
\]
For any $f \in \mathcal{S}'(\mathbb{R}^{n})$, the grand maximal function of $f$ is given by 
\[
\mathcal{M}_N f(x)=\sup\limits_{t>0}\sup\limits_{\varphi \in \mathcal{F}_{N}}\left\vert \left( \varphi_t \ast f\right)(x) \right\vert,
\]
where $\varphi_t(x) = t^{-n} \varphi(t^{-1} x)$.

\begin{definition} \label{Def Hpw} (See \cite[Theorem 6.1]{Ho}) 
Let $p(\cdot):\mathbb{R}^{n} \to ( 0,\infty)$, $0 < p_{-} \leq p_{+} < \infty$, $\omega \in \mathcal{W}_{p(\cdot)}$ and 
$N \geq \lfloor n s_{\omega, \, p(\cdot)} - n \rfloor$ fixed. The weighted variable Hardy space $H^{p(.)}_{\omega}(\mathbb{R}^{n})$ is the 
set of all $f \in \mathcal{S}'(\mathbb{R}^{n})$ for which $\| \mathcal{M}_N f \|_{L^{p(\cdot)}_{\omega}} < \infty$. In this case we define 
$\| f \|_{H^{p(\cdot)}_{\omega}} := \| \mathcal{M}_N f \|_{L^{p(\cdot)}_{\omega}}$.
\end{definition}

\begin{definition} \label{def atom} Let $p(\cdot):\mathbb{R}^{n} \to ( 0,\infty)$, $0 < p_{-} \leq p_{+} < \infty $, $p_{0} > 1$, and 
$\omega \in \mathcal{W}_{p(\cdot)}$. Fix an integer $N \geq 1$. A function $a(\cdot)$ 
on $\mathbb{R}^{n}$ is called a $\omega-(p(\cdot), p_{0}, N)$ atom if there exists a cube $Q$ such that

$a_{1})$ $\supp ( a ) \subset Q$,

$a_{2})$ $\| a \|_{L^{p_{0}}}\leq \frac{| Q |^{\frac{1}{p_{0}}}}{\| \chi _{Q} \|_{L^{p(\cdot)}_{\omega}}}$,

$a_{3})$ $\displaystyle{\int} x^{\beta}  a(x) \, dx = 0$ for all $| \beta | \leq N$.
\end{definition}

The following theorem is a version of the atomic decomposition for $H^{p(\cdot)}_{\omega}$ obtained in \cite{Ho}.

\begin{theorem} \label{w atomic decomp}
Let $1 < p_0 < \infty$, $p(\cdot) : \mathbb{R}^{n} \to (0, \infty)$ be a measurable function with $0 < p_{-} \leq p_{+} < \infty$ and 
$\omega \in \mathcal{W}_{p(\cdot)}$. Then, for every $f \in H^{p(\cdot)}_{\omega}(\mathbb{R}^{n}) \cap L^{p_0}(\mathbb{R}^{n})$ and
every integer $N \geq \lfloor n s_{\omega, \, p(\cdot)} - n \rfloor$ fixed, there exist a sequence of scalars 
$\{ \lambda_j \}_{j=1}^{\infty}$, a sequence of cubes $\{ Q_j \}_{j=1}^{\infty}$ and $\omega - (p(\cdot), p_0, N)$ atoms $a_j$ supported 
on $Q_j$ such that $f = \displaystyle{\sum_{j=1}^{\infty} \lambda_j a_j}$ converges in $L^{p_0}(\mathbb{R}^{n})$ and
\begin{equation} \label{Hpw norm atomic}
\left\| \sum_{j=1}^{\infty} \left( \frac{|\lambda_j |}{\| \chi_{Q_j} \|_{L^{p(\cdot)}_{\omega}}} \right)^{\theta} \chi_{Q_j} \right\|_{L^{p(\cdot)/\theta}_{\omega^{\theta}}}^{1/\theta}
\lesssim \| f \|_{H^{p(\cdot)}_{\omega}}, \,\,\, \text{for all} \,\,\, 0 < \theta < \infty,
\end{equation}
where the implicit constant in (\ref{Hpw norm atomic}) is independent of $\{ \lambda_j \}_{j=1}^{\infty}$, $\{ Q_j \}_{j=1}^{\infty}$, and 
$f$.
\end{theorem}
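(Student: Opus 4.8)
The plan is to prove Theorem \ref{w atomic decomp} by the classical Calderón–Zygmund decomposition of the grand maximal function, in the form developed by Fefferman and Stein (see \cite{Stein}) and adapted to the weighted variable setting in \cite{Ho}. Fix $f \in H^{p(\cdot)}_{\omega}(\mathbb{R}^{n}) \cap L^{p_0}(\mathbb{R}^{n})$ and an integer $N \geq \lfloor n s_{\omega, \, p(\cdot)} - n \rfloor$. For each $k \in \mathbb{Z}$ set
\[
\Omega_k = \{ x \in \mathbb{R}^n : \mathcal{M}_N f(x) > 2^k \},
\]
which is open since $\mathcal{M}_N f$ is lower semicontinuous. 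First I would perform a Whitney decomposition of each $\Omega_k$ into a family of cubes $\{ Q_{k,i} \}_i$ of bounded overlap whose distance to $\Omega_k^{c}$ is comparable to their side length, and choose a smooth partition of unity $\{ \zeta_{k,i} \}_i$ subordinate to a fixed dilate of these cubes with $\sum_i \zeta_{k,i} = \chi_{\Omega_k}$.

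The atoms are then produced by subtracting polynomial corrections. On each cube I would let $P_{k,i}$ be the projection of the relevant piece of $f$ onto the polynomials of degree at most $N$ with respect to the inner product weighted by $\zeta_{k,i}$, so that the bad functions $b_{k,i} := (f - P_{k,i}) \zeta_{k,i}$ satisfy $\int x^{\beta} b_{k,i}(x) \, dx = 0$ for all $|\beta| \leq N$; telescoping over the levels $k$ gives $f = \sum_{k,i} b_{k,i}$ with convergence both in $\mathcal{S}'(\mathbb{R}^n)$ and, using $f \in L^{p_0}$, in $L^{p_0}(\mathbb{R}^n)$. The crucial pointwise bound is $|b_{k,i}(x)| \lesssim 2^k$ on $Q_{k,i}$, obtained from the grand maximal function together with the Whitney geometry (on $\Omega_k^{c}$ one has $\mathcal{M}_N f \leq 2^k$, and each $Q_{k,i}$ lies at controlled distance from $\Omega_k^{c}$). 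This yields $\| b_{k,i} \|_{L^{p_0}} \lesssim 2^k |Q_{k,i}|^{1/p_0}$, so putting $\lambda_{k,i} := C \, 2^k \| \chi_{Q_{k,i}} \|_{L^{p(\cdot)}_{\omega}}$ and $a_{k,i} := b_{k,i} / \lambda_{k,i}$ makes each $a_{k,i}$ a genuine $\omega-(p(\cdot), p_0, N)$ atom (verifying $a_1)$, $a_2)$ and $a_3)$), with $\| \chi_{Q_{k,i}} \|_{L^{p(\cdot)}_{\omega}} < \infty$ guaranteed by $\omega \in \mathcal{W}_{p(\cdot)}$.

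It remains to establish the norm inequality (\ref{Hpw norm atomic}). By construction $|\lambda_{k,i}| / \| \chi_{Q_{k,i}} \|_{L^{p(\cdot)}_{\omega}} \approx 2^k$, and the bounded overlap of the Whitney cubes gives $\sum_i \chi_{Q_{k,i}} \lesssim \chi_{\Omega_k}$. Hence, for fixed $\theta > 0$,
\[
\sum_{k,i} \left( \frac{|\lambda_{k,i}|}{\| \chi_{Q_{k,i}} \|_{L^{p(\cdot)}_{\omega}}} \right)^{\theta} \chi_{Q_{k,i}} \lesssim \sum_{k} 2^{k\theta} \chi_{\Omega_k} \lesssim \left( \mathcal{M}_N f \right)^{\theta},
\]
where the last pointwise inequality follows by summing the geometric series over the levels $k$ with $x \in \Omega_k$. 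Taking $L^{p(\cdot)/\theta}_{\omega^{\theta}}$-norms and applying Lemma \ref{potencia s}(iv) with $s = \theta$ converts this into
\[
\left\| \sum_{k,i} \left( \frac{|\lambda_{k,i}|}{\| \chi_{Q_{k,i}} \|_{L^{p(\cdot)}_{\omega}}} \right)^{\theta} \chi_{Q_{k,i}} \right\|_{L^{p(\cdot)/\theta}_{\omega^{\theta}}}^{1/\theta} \lesssim \left\| (\mathcal{M}_N f)^{\theta} \right\|_{L^{p(\cdot)/\theta}_{\omega^{\theta}}}^{1/\theta} = \| \mathcal{M}_N f \|_{L^{p(\cdot)}_{\omega}} = \| f \|_{H^{p(\cdot)}_{\omega}},
\]
which is (\ref{Hpw norm atomic}); the implicit constant depends on $\theta$, $p(\cdot)$ and $\omega$ but not on $f$.

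I expect the main obstacle to be the uniform pointwise control $|b_{k,i}(x)| \lesssim 2^k$ together with the bookkeeping needed to keep the vanishing-moment conditions intact while telescoping between consecutive levels. This is precisely the delicate part of the Fefferman–Stein construction: one must estimate $f$ tested against the rescaled, normalized bumps in $\mathcal{F}_{N}$, control the polynomial projections $P_{k,i}$ (whose coefficients are bounded using that $\mathcal{M}_N f \leq 2^k$ near $Q_{k,i}$), and account for the correction terms arising from overlapping cubes at adjacent levels. It is here that the choice $N \geq \lfloor n s_{\omega, \, p(\cdot)} - n \rfloor$ is used; the weighted variable structure $\mathcal{W}_{p(\cdot)}$ enters only through the finiteness of $\| \chi_Q \|_{L^{p(\cdot)}_{\omega}}$ in the normalization and through Lemma \ref{potencia s}(iv) in the final step, so once the pointwise estimates are in place the passage to the weighted variable norm is routine.
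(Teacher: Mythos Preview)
Your proposal is correct and follows essentially the same approach as the paper: the paper's own proof simply cites \cite[Theorem 6.2]{Ho} for the existence of the atomic decomposition (built via precisely the Calder\'on--Zygmund/Whitney construction on level sets of $\mathcal{M}_N f$ that you outline, as in \cite[Chapter III]{Stein}) and \cite[Theorem 3.1]{Rocha1} for the $L^{p_0}$ convergence, without reproducing any details. Your sketch is a faithful expansion of that cited construction, and your final paragraph correctly identifies that the atoms are not literally $b_{k,i}/\lambda_{k,i}$ but arise from the telescoped differences $g_{k+1}-g_k$ with inter-level polynomial corrections; once that bookkeeping is done, your pointwise estimate $\sum_{k,i}(|\lambda_{k,i}|/\|\chi_{Q_{k,i}}\|_{L^{p(\cdot)}_\omega})^{\theta}\chi_{Q_{k,i}}\lesssim (\mathcal{M}_N f)^{\theta}$ and the use of Lemma~\ref{potencia s}(iv) give exactly (\ref{Hpw norm atomic}).
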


\begin{proof}
The existence of a such atomic decomposition is guaranteed by \cite[Theorem 6.2]{Ho}. Its construction is analogous to that given for classical Hardy spaces (see \cite[Chapter III]{Stein}). So, following the proof in \cite[Theorem 3.1]{Rocha1}, we obtain the convergence of the atomic series to $f$ in $L^{p_0}(\mathbb{R}^{n})$.
\end{proof}

\begin{definition} \label{Plog}
We say that an exponent function $p(\cdot) : \mathbb{R}^{n} \to (0, \infty)$ such that $0 < p_{-} \leq p_{+} < \infty$ belongs 
to $\mathcal{P}^{\log}(\mathbb{R}^{n})$, if there exist two positive constants $C$ and $C_{\infty}$ such that $p(\cdot)$ satisfies the 
local log-H\"older continuity condition, i.e.:
\[
|p(x) - p(y)| \leq \frac{C}{-\log(|x-y|)}, \,\,\, |x-y| \leq \frac{1}{2},
\]
and is log-H\"older continuous at infinity, i.e.:
\[
|p(x) - p_{\infty}| \leq \frac{C_{\infty}}{\log(e+|x|)}, \,\,\, x \in \mathbb{R}^{n},
\]
for some $p_{-} \leq p_{\infty} \leq p_{+}$.
\end{definition}

We define the set $\mathcal{S}_{0}(\mathbb{R}^{n})$ by
\[
\mathcal{S}_{0}(\mathbb{R}^{n}) =\left\{ \varphi \in \mathcal{S}(\mathbb{R}^{n}) : \int x^{\beta} \varphi(x) dx = 0, \,\, \text{for all} \, 
\beta \in \mathbb{N}_{0}^{n} \right\}.
\] 

\begin{proposition} (\cite[Proposition 2.1]{VHo}) \label{dense}
Let $p(\cdot) \in \mathcal{P}^{\log}(\mathbb{R}^{n})$ with $0 < p_{-} \leq p_{+} < \infty$. If $\omega \in \mathcal{W}_{p(\cdot)}$, then
$\mathcal{S}_{0}(\mathbb{R}^{n}) \subset H^{p(\cdot)}_{\omega}(\mathbb{R}^{n})$ densely.
\end{proposition}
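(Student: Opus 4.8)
The statement has two halves---the inclusion $\mathcal{S}_0(\mathbb{R}^n)\subset H^{p(\cdot)}_\omega(\mathbb{R}^n)$ and its density---and I would prove them separately, the inclusion first. For $\varphi\in\mathcal{S}_0(\mathbb{R}^n)$ the plan is to show that its grand maximal function decays rapidly, $\mathcal{M}_N\varphi(x)\lesssim(1+|x|)^{-(n+N+1)}$. This is the estimate underlying the atomic theory of \cite{Ho,Stein}: writing $(\psi_t\ast\varphi)(x)=\int\psi_t(x-z)\varphi(z)\,dz$ for $\psi\in\mathcal{F}_N$ and subtracting from $\psi_t(x-\cdot)$ its Taylor polynomial of degree $N$ at the origin---legitimate since $\varphi$ has vanishing moments of every order---one gains the factor $|z|^{N+1}$ from the remainder; splitting the integral at $|z|=|x|/2$, estimating the near part through the derivatives of $\psi$ controlled by the admissibility constraint defining $\mathcal{F}_N$ and the far part through the Schwartz decay of $\varphi$, and finally optimizing over $t>0$, yields the claimed decay.

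It then remains to see that a profile $(1+|x|)^{-(n+N+1)}$ belongs to $L^{p(\cdot)}_\omega$. I would decompose $\mathbb{R}^n$ into the unit ball and the dyadic annuli $A_k=\{2^{k-1}\le|x|<2^k\}$, dominate $\mathcal{M}_N\varphi$ by $C\sum_{k\ge0}2^{-(n+N+1)k}\chi_{A_k}$, and use the $\underline{p}$-subadditivity of $\|\cdot\|_{L^{p(\cdot)}_\omega}^{\underline{p}}$ (the quasi-triangle inequality of Lemma \ref{potencia s}(iii)) to reduce matters to controlling $\|\chi_{A_k}\|_{L^{p(\cdot)}_\omega}$. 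Condition $(i)$ of Definition \ref{pesos Wp}, via Lemma \ref{potencia s}(iv), makes each such quantity finite and, more precisely, of at most polynomial growth $\|\chi_{Q(0,2^k)}\|_{L^{p(\cdot)}_\omega}\lesssim 2^{Ak}$ with $A$ comparable to $n\,s_{\omega,\,p(\cdot)}$. Since $N\ge\lfloor n\,s_{\omega,\,p(\cdot)}-n\rfloor$ forces $n+N+1>A$, the resulting series converges and $\mathcal{M}_N\varphi\in L^{p(\cdot)}_\omega$, that is, $\varphi\in H^{p(\cdot)}_\omega$.

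For density I would invoke the atomic decomposition of \cite{Ho} (cf.\ Theorem \ref{w atomic decomp}): finite linear combinations of $\omega$-$(p(\cdot),p_0,N)$ atoms are dense in $H^{p(\cdot)}_\omega$, so by the quasi-triangle inequality it suffices to approximate a single atom $a$, supported in a cube $Q_0$, by elements of $\mathcal{S}_0$. This I would carry out in two moves. First, mollify: $a_\epsilon=a\ast\eta_\epsilon$ is smooth and compactly supported, still has vanishing moments up to order $N$, and $a-a_\epsilon$ is, after normalization, a small $L^{p_0}$-multiple of an atom on a fixed dilate of $Q_0$, whence $a_\epsilon\to a$ in $H^{p(\cdot)}_\omega$ as $\epsilon\to0$. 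Second, remove the surviving higher moments by a Fourier cut-off: choosing $\zeta\in C_c^\infty$ with $\zeta\equiv1$ near the origin and supported in the unit ball, and setting $\widehat{\varphi_\delta}=\widehat{a_\epsilon}\,(1-\zeta(\cdot/\delta))$, the transform $\widehat{\varphi_\delta}$ vanishes near $0$, so $\varphi_\delta\in\mathcal{S}_0$, while the error is $a_\epsilon-\varphi_\delta=a_\epsilon\ast\Psi_\delta$ with $\Psi_\delta=\delta^n\check\zeta(\delta\,\cdot)$, a Schwartz function whose moments still vanish up to order $N$.

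The crux, and the step I expect to be the main obstacle, is to prove $\|a_\epsilon-\varphi_\delta\|_{H^{p(\cdot)}_\omega}\to0$ as $\delta\to0$: the function $a_\epsilon\ast\Psi_\delta$ has small amplitude but lives at the large scale $1/\delta$, so its smallness must be extracted quantitatively and weighed against the growth of $\|\chi_{Q(0,1/\delta)}\|_{L^{p(\cdot)}_\omega}$. Using the vanishing of the moments of $a_\epsilon$ up to order $N$ to subtract a degree-$N$ Taylor polynomial of $\Psi_\delta$ produces the gain $|(a_\epsilon\ast\Psi_\delta)(x)|\lesssim\delta^{\,n+N+1}(1+\delta|x|)^{-L}$ for every $L$, and the decay estimate of the first paragraph, applied quantitatively, transfers this gain to $\mathcal{M}_N(a_\epsilon-\varphi_\delta)$. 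Norming over $L^{p(\cdot)}_\omega$ and invoking once more the polynomial bound $\|\chi_{Q(0,1/\delta)}\|_{L^{p(\cdot)}_\omega}\lesssim\delta^{-A}$ gives $\|a_\epsilon-\varphi_\delta\|_{H^{p(\cdot)}_\omega}\lesssim\delta^{\,n+N+1-A}\to0$, the decisive inequality $n+N+1>A$ being exactly the one already used for the inclusion. Chaining the two approximations and summing over the finitely many atoms with the quasi-triangle inequality then places an element of $\mathcal{S}_0$ arbitrarily close to any prescribed $f\in H^{p(\cdot)}_\omega$, which is the asserted density.
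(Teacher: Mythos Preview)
The paper does not prove this proposition; it is quoted as \cite[Proposition~2.1]{VHo} and used as a black box, so there is no argument here to compare against yours.

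Your outline follows the expected route and is essentially sound, but two technical points need tightening before it becomes a proof. First, in the inclusion step you subtract the degree-$N$ Taylor polynomial of $\psi_t(x-\cdot)$ and claim a remainder of order $|z|^{N+1}$; that remainder is controlled by $(N{+}1)$-st derivatives of $\psi$, which the seminorm defining $\mathcal{F}_N$ does \emph{not} bound. You must either drop to a degree-$(N{-}1)$ polynomial---losing one unit in the decay exponent---or, since by \cite[Theorem~6.1]{Ho} the space $H^{p(\cdot)}_\omega$ is independent of the choice of $N\ge\lfloor ns_{\omega,p(\cdot)}-n\rfloor$, simply work with a larger $N$ throughout. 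Second, the growth bound $\|\chi_{Q(0,2^k)}\|_{L^{p(\cdot)}_\omega}\lesssim 2^{Ak}$ with $A$ comparable to $ns_{\omega,p(\cdot)}$ does not follow from condition $(i)$ of Definition~\ref{pesos Wp}; it requires condition $(ii)$, via an estimate of the type $\|\chi_Q\|_{L^{sp(\cdot)}_{\omega^{1/s}}}\,\|\chi_Q\|_{L^{(sp(\cdot))'}_{\omega^{-1/s}}}\lesssim|Q|$ for $s>s_{\omega,p(\cdot)}$ (this is essentially \cite[Lemma~5.4]{Ho}), and you should invoke it explicitly so that the crucial comparison $n+N+1>A$ is actually justified. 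The same matching of decay against growth reappears in your density step when you bound $\|\mathcal{M}_N(a_\epsilon\ast\Psi_\delta)\|_{L^{p(\cdot)}_\omega}$, so once the two issues above are fixed that estimate goes through as well.
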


\section{Auxiliary results}

The following four lemmas will allow us to obtain our main result. These lemmas do not require the assumption 
$\| \chi_Q \|_{L^{q(\cdot)}_{\omega}} \approx |Q|^{-\alpha/n} \| \chi_Q \|_{L^{p(\cdot)}_{\omega}}$.

\begin{lemma} (\cite[Lemma 4]{Rocha4}) \label{2Q}
Let $p(\cdot) : \mathbb{R}^{n} \to (0, \infty)$ be a measurable function with $0 < p_{-} \leq p_{+} < \infty$. If 
$\omega \in \mathcal{W}_{p(\cdot)}$, then, for every cube $Q \subset \mathbb{R}^{n}$,
\[
\| \chi_{2Q} \|_{L^{p(\cdot)}_\omega}  \approx \| \chi_{Q} \|_{L^{p(\cdot)}_\omega} .
\]
\end{lemma}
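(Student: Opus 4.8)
The inequality $\|\chi_{Q}\|_{L^{p(\cdot)}_\omega} \le \|\chi_{2Q}\|_{L^{p(\cdot)}_\omega}$ is immediate, since $\chi_Q \le \chi_{2Q}$ and the quasi-norm is monotone; so the whole point is the reverse bound $\|\chi_{2Q}\|_{L^{p(\cdot)}_\omega} \lesssim \|\chi_{Q}\|_{L^{p(\cdot)}_\omega}$. The plan is to reduce to an exponent $\ge 1$, pass to the dual formulation of Proposition \ref{norma equivalente}, and exploit that although $M$ need not be bounded on $L^{p(\cdot)}_\omega$ itself, condition $(ii)$ of Definition \ref{pesos Wp} makes it bounded on the relevant conjugate space.

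First I would fix $s > \max\{1,1/p_-\}$ and $\kappa > 1$ as in part $(ii)$ of Definition \ref{pesos Wp}, so that $M$ is bounded on $L^{(sp(\cdot))'/\kappa}_{\omega^{-\kappa/s}}$. Using the pointwise Jensen inequality $(Mf)^{\kappa} \le M(|f|^{\kappa})$ (valid since $\kappa \ge 1$) together with Lemma \ref{potencia s}(iv), one upgrades this to the boundedness of $M$ on $L^{(sp(\cdot))'}_{\omega^{-1/s}}$, because
\[
\|Mf\|_{L^{(sp(\cdot))'}_{\omega^{-1/s}}}^{\kappa} = \|(Mf)^{\kappa}\|_{L^{(sp(\cdot))'/\kappa}_{\omega^{-\kappa/s}}} \le \|M(|f|^{\kappa})\|_{L^{(sp(\cdot))'/\kappa}_{\omega^{-\kappa/s}}} \lesssim \||f|^{\kappa}\|_{L^{(sp(\cdot))'/\kappa}_{\omega^{-\kappa/s}}} = \|f\|_{L^{(sp(\cdot))'}_{\omega^{-1/s}}}^{\kappa}.
\]
Since $s > 1/p_-$ forces $(sp)_- = s p_- > 1$, Lemma \ref{potencia s}(iv) also gives $\|\chi_{2Q}\|_{L^{p(\cdot)}_\omega} = \|\chi_{2Q}\|_{L^{sp(\cdot)}_{\omega^{1/s}}}^{s}$, and Proposition \ref{norma equivalente} applies to the exponent $sp(\cdot) \ge 1$, so that
\[
\|\chi_{2Q}\|_{L^{sp(\cdot)}_{\omega^{1/s}}} \approx \sup\Big\{ \textstyle\int_{2Q} |g| \,:\, \|g\|_{L^{(sp(\cdot))'}_{\omega^{-1/s}}} \le 1 \Big\}.
\]

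Now fix such a $g$ (we may assume $g = g\chi_{2Q}$). For every $x \in Q$ the cube $2Q$ contains $x$, hence $\frac{1}{|2Q|}\int_{2Q}|g| \le Mg(x)$, i.e. $\big(\int_{2Q}|g|\big)\chi_Q \le 2^n|Q|\,Mg$ pointwise on $\mathbb{R}^n$. Taking $L^{(sp(\cdot))'}_{\omega^{-1/s}}$-quasi-norms and invoking the boundedness of $M$ established above,
\[
\Big(\textstyle\int_{2Q}|g|\Big)\,\|\chi_Q\|_{L^{(sp(\cdot))'}_{\omega^{-1/s}}} \lesssim |Q|\,\|g\|_{L^{(sp(\cdot))'}_{\omega^{-1/s}}} \le |Q|.
\]
Finally, Hölder's inequality in the weighted variable Lebesgue space (a consequence of Proposition \ref{norma equivalente}) gives $|Q| = \int \chi_Q\cdot\chi_Q \lesssim \|\chi_Q\|_{L^{sp(\cdot)}_{\omega^{1/s}}}\,\|\chi_Q\|_{L^{(sp(\cdot))'}_{\omega^{-1/s}}}$, so $\int_{2Q}|g| \lesssim \|\chi_Q\|_{L^{sp(\cdot)}_{\omega^{1/s}}}$ uniformly in $g$. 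Taking the supremum and raising to the $s$-th power yields $\|\chi_{2Q}\|_{L^{p(\cdot)}_\omega} \lesssim \|\chi_{Q}\|_{L^{p(\cdot)}_\omega}$, as desired.

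The step I expect to require the most care is the self-improvement in the second paragraph: matching the space on which Definition \ref{pesos Wp}$(ii)$ directly provides the boundedness of $M$ with the conjugate space $L^{(sp(\cdot))'}_{\omega^{-1/s}}$ actually used in the duality argument. Once this is in place, every remaining estimate is elementary. One should also check at the outset that $\omega^{\pm 1/s}$ is locally integrable and that the quasi-norms $\|\chi_Q\|_{L^{(sp(\cdot))'}_{\omega^{-1/s}}}$ are finite and positive, which follows from part $(i)$ of Definition \ref{pesos Wp} and Lemma \ref{potencia s}.
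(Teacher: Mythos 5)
Your argument is correct, and it is worth noting at the outset that the paper itself offers no proof of this lemma: it is simply imported as \cite[Lemma 4]{Rocha4}. What you give is a valid self-contained proof by the standard duality--maximal-operator technique: the self-improvement $(Mf)^{\kappa}\le M(|f|^{\kappa})$ upgrading boundedness of $M$ from $L^{(sp(\cdot))'/\kappa}_{\omega^{-\kappa/s}}$ to $L^{(sp(\cdot))'}_{\omega^{-1/s}}$ is exactly the step the author uses in the proof of Lemma \ref{Qalfan}, and the rest (rescaling by $s$ so that the exponent exceeds $1$, Proposition \ref{norma equivalente}, the pointwise bound $\bigl(\int_{2Q}|g|\bigr)\chi_Q\le 2^n|Q|\,Mg$, and H\"older to cancel $\|\chi_Q\|_{L^{(sp(\cdot))'}_{\omega^{-1/s}}}$) is the dualized form of the usual doubling argument, adapted to the fact that $M$ is only known to be bounded on the conjugate space. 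The one place where your justification is imprecise is the cancellation step: you need $0<\|\chi_Q\|_{L^{(sp(\cdot))'}_{\omega^{-1/s}}}<\infty$, and part $(i)$ of Definition \ref{pesos Wp} gives finiteness only for the particular exponent $p_{\ast}$ there, which need not coincide with (or dominate) $1/s$ from part $(ii)$. The fact is nevertheless true and cheap to supply from the very boundedness of $M$ you have just established: $\omega^{1/s}\chi_Q$ lies in $L^{(sp(\cdot))'}_{\omega^{-1/s}}$ with norm $\|\chi_Q\|_{L^{(sp(\cdot))'}}<\infty$, and $\chi_Q\le c_Q^{-1}M(\omega^{1/s}\chi_Q)$ with $c_Q=|Q|^{-1}\int_Q\omega^{1/s}>0$, whence $\|\chi_Q\|_{L^{(sp(\cdot))'}_{\omega^{-1/s}}}<\infty$. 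With that patch the proof is complete.
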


\begin{lemma} (\cite[Theorem 3]{Rocha4})\label{Feff-Stein ineq}
Let $0 \leq \alpha < n$, $1 < u < \infty$ and let $q(\cdot) : \mathbb{R}^{n} \to (0, \infty)$ be a measurable function with 
$0 < q_{-} \leq q_{+} < \infty$. If $\omega \in \mathcal{W}_{q(\cdot)}$, then for 
$\frac{1}{p(\cdot)} := \frac{1}{q(\cdot)} + \frac{\alpha}{n}$ and any $s > s_{\omega, \, q(\cdot)} + \frac{\alpha}{n}$,
\begin{equation} \label{maximal fract ineq}
\left\| \left( \sum_{j} (M_{\frac{\alpha}{s}}f_j)^{u} \right)^{1/u} \right\|_{L^{sq(\cdot)}_{\omega^{1/s}}} \lesssim 
\left\| \left( \sum_{j} |f_j|^{u} \right)^{1/u} \right\|_{L^{sp(\cdot)}_{\omega^{1/s}}},
\end{equation}
holds for all sequences of bounded measurable functions with compact support $\{ f_j \}_{j=1}^{\infty}$.
\end{lemma}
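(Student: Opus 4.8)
The plan is to prove this as an instance of off-diagonal Rubio de Francia extrapolation adapted to the weighted variable Lebesgue scale, taking the scalar Muckenhoupt--Wheeden bound for the fractional maximal operator as the only external input. First I would record the exponent bookkeeping. Writing $\beta := \alpha/s$, the defining relation $\frac{1}{p(\cdot)} = \frac{1}{q(\cdot)} + \frac{\alpha}{n}$ gives $\frac{1}{sp(\cdot)} = \frac{1}{sq(\cdot)} + \frac{\beta}{n}$, so the target and source exponents $P(\cdot):=sq(\cdot)$ and $\tilde P(\cdot):=sp(\cdot)$ stand in the off-diagonal relation $\frac1{\tilde P}=\frac1P+\frac\beta n$ with common weight $W:=\omega^{1/s}$. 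Since $s>s_{\omega,q(\cdot)}+\frac\alpha n>\max\{1,1/q_-\}$, we have $(sq)_-=sq_->1$, so $L^{sq(\cdot)}_{\omega^{1/s}}$ is a genuine normed space and Proposition \ref{norma equivalente} applies to it with conjugate exponent $(sq(\cdot))'$ and reciprocal weight $\omega^{-1/s}$.

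Next I would dualize and run the Rubio de Francia algorithm. By Proposition \ref{norma equivalente} the left-hand side is controlled by $\int (\sum_j (M_\beta f_j)^u)^{1/u}\,h$, the supremum taken over nonnegative $h$ with $\|h\|_{L^{(sq(\cdot))'}_{\omega^{-1/s}}}\le1$. Because $\omega\in\mathcal W_{q(\cdot)}$ and $s>s_{\omega,q(\cdot)}$, the definition of the class together with the left-openness recorded by $\kappa^s_{\omega,q(\cdot)}>1$ furnishes some $\kappa>1$ for which $M$ is bounded on $X:=L^{(sq(\cdot))'/\kappa}_{\omega^{-\kappa/s}}$; by Lemma \ref{potencia s}(iv) membership of $h$ in the dual space is equivalent to membership of $|h|^\kappa$ in $X$, so the strict gain $s>s_{\omega,q(\cdot)}+\frac\alpha n$ leaves exactly the room in the exponent needed to absorb the fractional order $\beta=\alpha/s$. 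Applying the iteration $\mathcal R g=\sum_{k\ge0}(2\|M\|_X)^{-k}M^k g$ to the rescaled test function produces a majorant $H\ge h$ with $\|H\|_X\lesssim\|h\|_X$ and $H\in A_1$, hence, after taking the appropriate power, a Muckenhoupt weight in a two-parameter class $A_{p_0,q_0}$ for a convenient fixed pair with $\frac1{p_0}-\frac1{q_0}=\frac\beta n$.

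With the $A_{p_0,q_0}$ weight in hand I would invoke the scalar weighted $\ell^u$-valued estimate for $M_\beta$, namely $\|(\sum_j(M_\beta f_j)^u)^{1/u}\|_{L^{q_0}(W^{q_0})}\lesssim\|(\sum_j|f_j|^u)^{1/u}\|_{L^{p_0}(W^{p_0})}$ for $W\in A_{p_0,q_0}$, itself deduced from the classical Muckenhoupt--Wheeden theorem by diagonal extrapolation in the $u$-variable. Feeding the constructed weight into this inequality, pairing by Hölder's inequality in the rescaled exponents, and converting the resulting integral back through Proposition \ref{norma equivalente} returns the $L^{sp(\cdot)}_{\omega^{1/s}}$ norm on the right. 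Here Proposition \ref{WqcWp}, which yields $\omega\in\mathcal W_{p(\cdot)}$ and $s_{\omega,p(\cdot)}\le s_{\omega,q(\cdot)}+\frac\alpha n<s$, is precisely what guarantees that the source side is a genuine norm and that the exponent arithmetic closes.

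The hard part will be the off-diagonal weight construction of the second step. Unlike the diagonal case, where an $A_1$ (equivalently $A_{p_0}$) majorant suffices, here the algorithm must deliver a weight in the two-parameter class $A_{p_0,q_0}$ with constants uniform in $h$, which forces one to run the iteration so that a suitable power of $H$ satisfies simultaneously the $A_1$ bound and the reverse estimate encoding $A_{p_0,q_0}$. Equally delicate is tracking the weight powers $\omega^{\pm1/s}$ and the rescaling by $s$ through the duality and Hölder steps, so that the fractional gain $\beta=\alpha/s$ is matched exactly by the gap between $sp(\cdot)$ and $sq(\cdot)$; it is the strict openness $\kappa^s_{\omega,q(\cdot)}>1$ that ultimately makes this matching possible.
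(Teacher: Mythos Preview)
The paper does not prove this lemma; it is quoted from \cite[Theorem 3]{Rocha4} and used as a black box in the proof of Theorem \ref{Hpw-Lqw and Hpw-Hqw}. There is therefore no in-paper argument to compare against.

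Your outline is the standard off-diagonal Rubio de Francia extrapolation, and it is exactly the mechanism the paper deploys for the companion Lemma \ref{Qalfan}: dualize via Proposition \ref{norma equivalente}, iterate $M$ on the dual space to manufacture an $A_1$ majorant, feed that into a classical weighted off-diagonal inequality, and close with H\"older. For the pairs $\bigl((\sum_j(M_{\alpha/s}f_j)^u)^{1/u},(\sum_j|f_j|^u)^{1/u}\bigr)$ the scalar input is the vector-valued Muckenhoupt--Wheeden bound, so the architecture is correct and almost certainly matches what is in \cite{Rocha4}.

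One point deserves care. You assert that $s>s_{\omega,q(\cdot)}$ together with the ``left-openness $\kappa^s_{\omega,q(\cdot)}>1$'' furnishes a $\kappa>1$ with $M$ bounded on $L^{(sq(\cdot))'/\kappa}_{\omega^{-\kappa/s}}$. But $\kappa^s_{\omega,q(\cdot)}$ is defined only for $s\in\mathbb S_{\omega,q(\cdot)}$, and the paper never claims that $s>s_{\omega,q(\cdot)}$ forces $s\in\mathbb S_{\omega,q(\cdot)}$. In fact this detour through $\kappa$ is unnecessary: as in the proof of Lemma \ref{Qalfan}, one runs the iteration directly on $L^{(sq(\cdot))'}_{\omega^{-1/s}}$, and the resulting $A_1$ majorant $\mathcal R h$ already lands in the required two-parameter class because $v\in A_1$ implies $v^{p_0/q_0}\in RH_{q_0/p_0}$ whenever $p_0<q_0$. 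The strict gap $s>s_{\omega,q(\cdot)}+\alpha/n$ is spent not on the iteration but on the exponent arithmetic in the final H\"older step, precisely as you say at the end; your sentence tying the fractional gain to the $\kappa$-room is the one place the bookkeeping slips.
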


\begin{lemma} \label{q0 estimate}
Let $q(\cdot) : \mathbb{R}^{n} \to (0, \infty)$ be a measurable function with $0 < q_{-} \leq q_{+} < \infty$. If 
$q_0 > \max \{1, q_{+} \}$ and $\omega \in \mathcal{W}_{q(\cdot)}$, then for any countable collection of cubes $\{ Q_j \}$ and non-negative functions $h_j$ such that
$\supp(h_j) \subset Q_j$
\begin{equation} \label{vector ineq}
\left\Vert  \sum_{j=1}^{\infty}  h_j \right\Vert_{L^{q(\cdot)}_{\omega}(\mathbb{R}^n)} \lesssim 
\left\Vert \sum_{j=1}^{\infty} \left( \frac{1}{|Q_j|}\int_{Q_j} h_j^{q_0} \right)^{1/q_0} \chi_{Q_j}  \right\Vert_{L^{q(\cdot)}_{\omega}(\mathbb{R}^n)}.
\end{equation}
\end{lemma}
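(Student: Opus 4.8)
\textbf{Proof proposal for Lemma \ref{q0 estimate}.}

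The plan is to reduce the inequality to the off-diagonal Fefferman--Stein vector-valued maximal inequality of Lemma \ref{Feff-Stein ineq} used with $\alpha = 0$, i.e.\ to the ordinary vector-valued inequality for the Hardy--Littlewood maximal operator $M$ on a suitable weighted variable Lebesgue space. First I would fix the parameters: since $q_0 > \max\{1, q_+\}$ and $\omega \in \mathcal{W}_{q(\cdot)}$, I will choose $s > s_{\omega,\,q(\cdot)}$ close enough to $s_{\omega,\,q(\cdot)}$ and set $q(\cdot)/q_0$ as the new exponent; note $(q/q_0)_+ = q_+/q_0 < 1$, so $q(\cdot)/q_0$ is a legitimate exponent function with $(q/q_0)_- \le (q/q_0)_+ < 1$. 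By Lemma \ref{potencia s}(iv), raising the $L^{q(\cdot)}_{\omega}$-norm to the power $q_0$ turns the left side into $\big\| (\sum_j h_j)^{q_0} \big\|_{L^{q(\cdot)/q_0}_{\omega^{q_0}}}$ and the right side into $\big\| (\sum_j g_j \chi_{Q_j})^{q_0} \big\|_{L^{q(\cdot)/q_0}_{\omega^{q_0}}}$, where $g_j := \big(\frac{1}{|Q_j|}\int_{Q_j} h_j^{q_0}\big)^{1/q_0}$.

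The key pointwise observation is that since $\supp(h_j) \subset Q_j$, for every $x \in \mathbb{R}^n$ one has
\[
h_j(x)^{q_0} \;\le\; \Big( \frac{1}{|Q_j|}\int_{Q_j} h_j^{q_0} \Big)\, |Q_j|\, |Q_j|^{-1}\chi_{Q_j}(x)\cdot(\text{something})
\]
— more precisely, $h_j(x)^{q_0}\chi_{Q_j}(x) \le \big(\frac{1}{|Q_j|}\int_{Q_j}h_j^{q_0}\big)\,|Q_j|\,\delta_x$ is not quite right; instead I would use that $h_j^{q_0}$ is dominated, after passing to $M$, by its average: for $x \in Q_j$, $\big(\frac{1}{|Q_j|}\int_{Q_j}h_j^{q_0}\big)\chi_{Q_j}(x) \le M(h_j^{q_0})(x)$, while for general $x$, $h_j(x)^{q_0}\le M(h_j^{q_0})(x)$ at Lebesgue points. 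The cleaner route, following \cite{Uribe}, is: bound the left side of \eqref{vector ineq} raised to $q_0$ using $\big(\sum_j h_j\big)^{q_0} \lesssim \sum_j h_j^{q_0}$ is false in general, so instead dualize. By Proposition \ref{norma equivalente} applied to the exponent $q(\cdot)/q_0$ (which is $\ge$ a constant $<1$, so one first uses Lemma \ref{potencia s}(iv) once more with a power $t$ making $(q/q_0)/t \ge 1$, or invokes the known $\ell^1$-duality form), pair $\sum_j h_j^{q_0}$ against a nonnegative $b$ with unit norm in the associate space, use $\int h_j^{q_0} b = |Q_j|\big(\frac{1}{|Q_j|}\int_{Q_j}h_j^{q_0}\big)\cdot \frac{1}{|Q_j|}\int_{Q_j} b \le \int_{Q_j} g_j^{q_0}\, M b$, sum in $j$, and re-dualize; the boundedness of $M$ on the relevant associate space is exactly what membership $\omega \in \mathcal{W}_{q(\cdot)}$ (via Lemma \ref{Feff-Stein ineq} with $\alpha=0$, $u$ chosen appropriately, or via the definition of $\mathbb{S}_{\omega,\,q(\cdot)}$) provides.

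The main obstacle I anticipate is the subunital exponent: $q(\cdot)/q_0$ has $(q/q_0)_+ < 1$, so $L^{q(\cdot)/q_0}_{\omega^{q_0}}$ is only a quasi-normed space and Proposition \ref{norma equivalente} does not apply directly. I would circumvent this exactly as in \cite[Lemma 4.9]{Uribe}: instead of choosing $q_0 > q_+$ only, also keep $q_0$ large enough (it already is), and dualize at the level of $L^{sq(\cdot)}_{\omega^{1/s}}$ with $s$ slightly bigger than $s_{\omega,\,q(\cdot)}$, where the relevant associate exponent $(sq(\cdot))'$ is $\ge 1$ and $M$ is bounded on $L^{(sq(\cdot))'/\kappa}_{\omega^{-\kappa/s}}$ for some $\kappa>1$ by definition of $\mathcal{W}_{q(\cdot)}$. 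Concretely: write the left side as $\|(\sum_j h_j)^{q_0}\|^{1/q_0}$ in $L^{sq(\cdot)}_{\ldots}$ after adjusting powers, apply Proposition \ref{norma equivalente} there to get a dual function $g$ with $\|g\|_{L^{(sq(\cdot))'}_{\omega^{-1/s}}}\le 1$, estimate $\int (\sum_j h_j)^{q_0} g \lesssim \sum_j \int_{Q_j} g_j^{q_0}\, M_\kappa g$ using Hölder's inequality with exponent $\kappa$ inside each cube together with $\supp(h_j)\subset Q_j$, and then re-apply Proposition \ref{norma equivalente} in reverse using the boundedness of $M$ on the associate space. Collecting constants and undoing the power changes via Lemma \ref{potencia s}(iv) yields \eqref{vector ineq}. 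The remaining steps — verifying the exponent arithmetic and that all the intermediate norms are finite (using Lemma \ref{2Q} and Definition \ref{pesos Wp}(i)) — are routine.
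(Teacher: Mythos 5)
Your proposal takes a different route from the paper: the paper's entire proof of Lemma \ref{q0 estimate} is a one-line reduction to \cite[Lemma 4.9]{Uribe} (applied with $g_j=\omega h_j$, using that $\| f \|_{L^{q(\cdot)}_\omega}=\| f\omega \|_{L^{q(\cdot)}}$), whereas you attempt to reprove that lemma from scratch by duality. The reconstruction, however, has a genuine gap at its core.

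The problem is the combination of the sub-unital exponent with overlapping cubes. You correctly note that $L^{q(\cdot)}_{\omega}$ is only quasi-normed when $q_-<1$, so before dualizing one must pass to a power $t<\min\{1,q_-\}$; but observe that raising to the power $q_0>1$, as you propose at one point, moves the exponent \emph{down} to $q(\cdot)/q_0$ with $(q/q_0)_+<1$ --- it does not land you in $L^{sq(\cdot)}_{\omega^{1/s}}$ with $s>1$, so "adjusting powers" conflates two incompatible operations. If instead you take $t=1/s$, use $\bigl(\sum_j h_j\bigr)^t\le\sum_j h_j^t$, dualize in $L^{sq(\cdot)}_{\omega^{1/s}}$, and apply H\"older with exponent $q_0/t$ on each cube followed by the maximal-function step, you end up with $\bigl\Vert\sum_j A_j^{t}\chi_{Q_j}\bigr\Vert_{L^{sq(\cdot)}_{\omega^{1/s}}}$, where $A_j=\bigl(\frac{1}{|Q_j|}\int_{Q_j}h_j^{q_0}\bigr)^{1/q_0}$. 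Since $t<1$ and the cubes may overlap, $\sum_j A_j^{t}\chi_{Q_j}\ge\bigl(\sum_j A_j\chi_{Q_j}\bigr)^{t}$ pointwise with a ratio as large as $N^{1-t}$ at a point covered by $N$ cubes, so this quantity does \emph{not} control $\bigl\Vert\sum_j A_j\chi_{Q_j}\bigr\Vert^{t}_{L^{q(\cdot)}_{\omega}}$, which is what the right-hand side of (\ref{vector ineq}) requires. This is exactly the obstruction that makes Grafakos--Kalton-type lemmas nontrivial, and your sketch never confronts it. (Two further local errors: the displayed identity $\int h_j^{q_0}b=|Q_j|\bigl(\frac{1}{|Q_j|}\int_{Q_j}h_j^{q_0}\bigr)\cdot\frac{1}{|Q_j|}\int_{Q_j}b$ is false as an equality; and the bound $\int\bigl(\sum_j h_j\bigr)^{q_0}g\lesssim\sum_j\int_{Q_j}g_j^{q_0}\,M_{\kappa}g$ would need $\bigl(\sum_j h_j\bigr)^{q_0}\lesssim\sum_j h_j^{q_0}$, which you yourself observe is false for $q_0>1$.) A correct self-contained proof runs the other way: one first establishes the fixed-exponent inequality for $A_1$ (or $A_\infty$) weights --- this is where the real content of \cite{Grafakos} and \cite[Lemma 4.9]{Uribe} lies --- and then transfers it to $L^{q(\cdot)}_{\omega}$ via the Rubio de Francia algorithm built from the boundedness of $M$ on $L^{(sq(\cdot))'}_{\omega^{-1/s}}$, exactly as the paper itself does in the proof of Lemma \ref{Qalfan}. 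If you do not wish to invoke \cite[Lemma 4.9]{Uribe} as a black box, that extrapolation scheme is the argument you need to supply.
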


\begin{proof}
We apply \cite[Lemma 4.9]{Uribe} with $g_j = \omega \cdot h_j$. Then, (\ref{vector ineq}) follows from (\ref{quasi}).
\end{proof}

\begin{lemma} \label{Qalfan}
Let $0 < \alpha < n$ and let $q(\cdot) : \mathbb{R}^{n} \to (0, \infty)$ be a measurable function with $0 < q_{-} \leq q_{+} < \infty$. If 
$\omega \in \mathcal{W}_{q(\cdot)}$ and $\frac{1}{p(\cdot)} := \frac{1}{q(\cdot)} + \frac{\alpha}{n}$, then for any countable collection of cubes $\{ Q_j \}$ and $\lambda_j > 0$
\begin{equation} \label{vector ineq 2}
\left\Vert  \sum_{j=1}^{\infty} \lambda_j |Q_j|^{\frac{\alpha}{n}} \chi_{Q_j} \right\Vert_{L^{q(\cdot)}_{\omega}(\mathbb{R}^n)} \lesssim
\left\Vert \sum_{j=1}^{\infty} \lambda_j \chi_{Q_j}  \right\Vert_{L^{p(\cdot)}_{\omega}(\mathbb{R}^n)}.
\end{equation}
\end{lemma}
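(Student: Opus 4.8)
The plan is to derive \eqref{vector ineq 2} from the off-diagonal Fefferman--Stein inequality of Lemma \ref{Feff-Stein ineq}, using the homogeneity identity of Lemma \ref{potencia s}(iv) to move between the quasi-norm $\|\cdot\|_{L^{q(\cdot)}_{\omega}}$ and the quasi-norms $\|\cdot\|_{L^{sq(\cdot)}_{\omega^{1/s}}}$, $\|\cdot\|_{L^{sp(\cdot)}_{\omega^{1/s}}}$ in which that lemma is phrased. We may assume $\big\|\sum_j\lambda_j\chi_{Q_j}\big\|_{L^{p(\cdot)}_{\omega}}<\infty$, since otherwise there is nothing to prove.

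First I would fix a real number $s>s_{\omega,\,q(\cdot)}+\frac{\alpha}{n}$; then $s>s_{\omega,\,q(\cdot)}+\frac{\alpha}{n}\geq 1+\frac{\alpha}{n}>1$. Set $f_j:=\lambda_j^{1/s}\chi_{Q_j}$, which is a bounded measurable function with compact support. Testing the supremum defining $M_{\alpha/s}f_j$ against the cube $Q_j$ itself yields, for every $x\in Q_j$,
\[
M_{\alpha/s}f_j(x)\ \geq\ |Q_j|^{\frac{\alpha/s}{n}-1}\int_{Q_j}f_j\ =\ \lambda_j^{1/s}\,|Q_j|^{\frac{\alpha}{sn}},
\]
whence $\lambda_j|Q_j|^{\alpha/n}\chi_{Q_j}\leq(M_{\alpha/s}f_j)^{s}$ on $\mathbb{R}^{n}$, and therefore
\[
\sum_{j=1}^{\infty}\lambda_j|Q_j|^{\alpha/n}\chi_{Q_j}\ \leq\ \sum_{j=1}^{\infty}(M_{\alpha/s}f_j)^{s}\ =\ \left(\Big(\sum_{j=1}^{\infty}(M_{\alpha/s}f_j)^{s}\Big)^{1/s}\right)^{\!s}.
\]
Keeping the $s$-th power on the outside is the essential point: the bracketed quantity carries an $\ell^{s}$ (not $\ell^{1}$) norm of fractional maximal functions, which is precisely the form treated by Lemma \ref{Feff-Stein ineq}, so the apparently dangerous $\ell^{1}$ sum is never estimated directly.

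Next I would apply Lemma \ref{potencia s}(iv) with exponent $s$ to obtain
\[
\left\|\sum_{j}\lambda_j|Q_j|^{\alpha/n}\chi_{Q_j}\right\|_{L^{q(\cdot)}_{\omega}}\ \leq\ \left\|\Big(\sum_{j}(M_{\alpha/s}f_j)^{s}\Big)^{1/s}\right\|_{L^{sq(\cdot)}_{\omega^{1/s}}}^{\!s},
\]
then use Lemma \ref{Feff-Stein ineq} with $u=s$ (admissible because $1<s$, $s>s_{\omega,\,q(\cdot)}+\frac{\alpha}{n}$, and $\frac{1}{p(\cdot)}=\frac{1}{q(\cdot)}+\frac{\alpha}{n}$) to bound the last expression by a constant times $\big\|\big(\sum_j|f_j|^{s}\big)^{1/s}\big\|_{L^{sp(\cdot)}_{\omega^{1/s}}}^{s}$, and finally observe $\sum_j|f_j|^{s}=\sum_j\lambda_j\chi_{Q_j}$ and invoke Lemma \ref{potencia s}(iv) once more to rewrite this as $\big\|\sum_j\lambda_j\chi_{Q_j}\big\|_{L^{p(\cdot)}_{\omega}}$. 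Chaining the three estimates gives \eqref{vector ineq 2}. The only step requiring any thought is choosing the right linearization: the crude bound $\lambda_j|Q_j|^{\alpha/n}\chi_{Q_j}\leq M_{\alpha}(\lambda_j\chi_{Q_j})$ would leave an $\ell^{1}$ sum of fractional maximal functions, for which no vector-valued inequality is available, whereas raising to the power $s$ first makes the summation exponent coincide with the $u$ of Lemma \ref{Feff-Stein ineq} and renders the sum harmless; everything else is routine bookkeeping forced by the power trick.
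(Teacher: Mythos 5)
Your argument is correct, but it follows a genuinely different route from the paper. The paper proves Lemma \ref{Qalfan} by a Rubio de Francia extrapolation argument: it dualizes $\|F\|^{q_0}_{L^{q(\cdot)}_{\omega}}$ with $q_0=1/s$ via Proposition \ref{norma equivalente}, majorizes the dual function $g$ by an $\mathcal{A}_1$ weight $\mathcal{R}g$ produced by the iteration algorithm, and then invokes the scalar weighted inequality of \cite[Lemma 4.10]{Uribe} for the pair $\bigl(\sum_j\lambda_j|Q_j|^{\alpha/n}\chi_{Q_j},\ \sum_j\lambda_j\chi_{Q_j}\bigr)$ with $A_1$/reverse-H\"older weights, finishing with H\"older's inequality and a passage to the limit from finite sums. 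You instead observe the pointwise bound $\lambda_j|Q_j|^{\alpha/n}\chi_{Q_j}\leq(M_{\alpha/s}f_j)^{s}$ with $f_j=\lambda_j^{1/s}\chi_{Q_j}$, and reduce everything to the off-diagonal Fefferman--Stein inequality of Lemma \ref{Feff-Stein ineq} with $u=s$, together with Lemma \ref{potencia s}(iv); all hypotheses check out ($s>s_{\omega,q(\cdot)}+\alpha/n\geq 1$, the $f_j$ are bounded with compact support, and the lemma is stated for infinite sequences, so no limiting argument is needed). This is in fact the same device the paper uses later to estimate $U_2$ in Theorem \ref{Hpw-Lqw and Hpw-Hqw}. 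What each approach buys: yours is shorter and stays entirely inside the paper's toolkit, making Lemma \ref{Qalfan} a direct corollary of \cite[Theorem 3]{Rocha4} and avoiding duality, the operator $\mathcal{R}$, and the external \cite[Lemma 4.10]{Uribe}; the paper's version is self-contained at the level of scalar weighted inequalities and exhibits the extrapolation mechanism explicitly, which is closer in spirit to \cite{Uribe} (where the analogous Lemma 4.11 is proved this way). Since the proof of Lemma \ref{Feff-Stein ineq} in \cite{Rocha4} itself rests on similar extrapolation ideas, the total mathematical content is comparable, but your reduction is a legitimate and arguably cleaner packaging.
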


\begin{proof}
Given $\omega \in \mathcal{W}_{q(\cdot)}$, by Definition \ref{pesos Wp}, there exist $s > 1/q_{-}$ and $\kappa >1$ such that the Hardy-Littlewood maximal operator $M$ is bounded on $L^{(sq(\cdot))'/\kappa}_{\omega^{-\kappa/s}}(\mathbb{R}^{n})$. Then, by Jensen's inequality we have 
$(Mf)^{\kappa} \leq M(|f|^{\kappa})$, so $M$ results bounded on $L^{(sq(\cdot))'}_{\omega^{-1/s}}(\mathbb{R}^{n})$. Now, for $0 < \alpha < n$, define
\[
\mathcal{F}_{\alpha} = \left\{ \left( \sum_{j=1}^{N} \lambda_j |Q_j|^{\frac{\alpha}{n}} \chi_{Q_j}, 
\sum_{j=1}^{N} \lambda_j \chi_{Q_j} \right) : N \in \mathbb{N}, \lambda_j >0, Q_j \in \mathcal{Q}  \right\},
\]
where $\mathcal{Q}$ denotes the set of all the cubes of $\mathbb{R}^n$.

Let $q_0 = \frac{1}{s}$ and let $p_0$ be defined by $\frac{1}{p_0} := \frac{1}{q_0} + \frac{\alpha}{n}$. Now, for any $v \in \mathcal{A}_1$ one has that $v^{p_0/q_0} \in RH_{q_0/p_0}$ (for the definition of the $\mathcal{A}_{1}$ class and the set $RH_{q_0/p_0}$, the reader may refer to \cite[Chapter 7]{Grafa}). Then, by \cite[Lemma 4.10]{Uribe} (applied with $w^{p_0} = v^{p_0/q_0}$), there exists an universal constant $C > 0$ such that for any $(F, G) \in \mathcal{F}_{\alpha}$ and any $v \in \mathcal{A}_1$
\begin{equation} \label{weighted fract ineq}
\int [F(x)]^{q_0} v(x) \, dx \leq C \left( \int [G(x)]^{p_0} [v(x)]^{p_0/q_0} \, dx \right)^{q_0/p_0}.
\end{equation}
On the other hand, by Lemma \ref{potencia s} - (iv) and Proposition \ref{norma equivalente}, we have
\begin{equation} \label{norma}
\| F \|^{q_0}_{L^{q(\cdot)}_{\omega}} = \| F^{q_0} \|_{L^{sq(\cdot)}_{\omega^{1/s}}}
\leq C \sup \left\{ \int_{\mathbb{R}^{n}} \left| [F(x)]^{q_0} g(x) \right| dx : \| g \|_{L^{(sq(\cdot))'}_{\omega^{-1/s}}} \leq 1 \right\}
\end{equation}
for some constant $C > 0$.

Let $\mathcal{R}$ be the operator defined on $L^{(sq(\cdot))'}_{\omega^{-1/s}}$ by
\[
\mathcal{R}g(x) = \sum_{k=0}^{\infty} \frac{M^{k}g(x)}{2^{k} \| M \|_{L^{(sq(\cdot))'}_{\omega^{-1/s}}}^{k}},
\]
where, for $k \geq 1$, $M^{k}$ denotes $k$ iterations of the Hardy-Littlewood maximal operator $M$, $M^{0} = M$, and 
$\| M \|_{L^{(sq(\cdot))'}_{\omega^{-1/s}}}$ is the operator norm of the maximal operator $M$ on 
$L^{(sq(\cdot))'}_{\omega^{-1/s}}$. It follows immediately from this definition that:

$(i)$ if $g$ is non-negative, $g(x) \leq \mathcal{R}g(x)$ a.e. $x \in \mathbb{R}^{n}$;

$(ii)$ $\| \mathcal{R}g \|_{L^{(sq(\cdot))'}_{\omega^{-1/s}}} \leq 2 \| g \|_{L^{(sq(\cdot))'}_{\omega^{-1/s}}}$; 

$(iii)$ $\mathcal{R}g \in \mathcal{A}_1$ with $[\mathcal{R}g]_{\mathcal{A}_1} \leq 2 \| M \|_{L^{(sq(\cdot))'}_{\omega^{-1/s}}}$.
\\
Since $F$ is non-negative, we can take the supremum in (\ref{norma}) over those non-negative $g$ only. For any fixed non-negative 
$g \in L^{(sq(\cdot))'}_{\omega^{-1/s}}$, by $(i)$ above we have that
\begin{equation} \label{int g}
\int [F(x)]^{q_0} g(x) dx \leq \int [F(x)]^{q_0} (\mathcal{R}g)(x) dx.
\end{equation}
Being $\frac{1}{p(\cdot)} - \frac{1}{q(\cdot)} = \frac{1}{p_0} - \frac{1}{q_0}$ and $q_0 = \frac{1}{s}$, we have 
$\frac{p_0}{q_0} \left( \frac{1}{p_0} p(\cdot) \right)' = \left( \frac{1}{q_0} q(\cdot) \right)' = (sq(\cdot))'$. Then, $(iii)$, 
(\ref{weighted fract ineq}), H\"older's inequality and Lemma \ref{potencia s} - (iv) yield
\begin{equation} \label{int Rg}
\int [F(x)]^{q_0} (\mathcal{R}g)(x) dx \leq C \left( \int [G(x)]^{p_0} [(\mathcal{R}g)(x)]^{p_0 / q_0} dx \right)^{q_0/p_0} 
\end{equation}
\[
\leq C \| G^{p_0} \|_{L^{p(\cdot)/p_0}_{\omega^{p_0}}}^{q_0/p_0} 
\|(\mathcal{R}g)^{p_0/q_0} \|_{L^{(p(\cdot)/p_0)'}_{\omega^{-p_0}}}^{q_0/p_0}
\]
\[
= C \| G \|^{q_0}_{L^{p(\cdot)}_{\omega}} 
\|\mathcal{R}g \|_{L^{\frac{p_0}{q_0} \left(\frac{p(\cdot)}{p_0} \right)'}_{\omega^{-q_0}}}
\]
\[
= C \| G \|^{q_0}_{L^{p(\cdot)}_{\omega}} \| \mathcal{R}g \|_{L^{(sq(\cdot))'}_{\omega^{-1/s}}}
\]
now, $(ii)$ gives
\[
\leq C \| G \|^{q_0}_{L^{p(\cdot)}_{\omega}} \| g \|_{L^{(sq(\cdot))'}_{\omega^{-1/s}}}.
\]
Thus, for every $(F,G) \in \mathcal{F}_{\alpha}$ fixed, (\ref{int g}) and (\ref{int Rg}) lead to
\begin{equation} \label{norma2}
\int [F(x)]^{q_0} g(x) dx \leq C \| G \|^{q_0}_{L^{p(\cdot)}_{\omega}},
\end{equation}
for all non-negative $g$ with $\| g \|_{L^{(sq(\cdot))'}_{\omega^{-1/s}}} \leq 1$. Then, (\ref{norma}) and (\ref{norma2}) give 
\begin{equation} \label{vector ineq 3}
\left\Vert  \sum_{j=1}^{N} \lambda_j |Q_j|^{\frac{\alpha}{n}} \chi_{Q_j} \right\Vert_{L^{q(\cdot)}_{\omega}(\mathbb{R}^n)} \lesssim
\left\Vert \sum_{j=1}^{N} \lambda_j \chi_{Q_j}  \right\Vert_{L^{p(\cdot)}_{\omega}(\mathbb{R}^n)},
\end{equation}
for every $N \geq 1$. Finally, by passing to the limit in (\ref{vector ineq 3}), we obtain (\ref{vector ineq 2}).
\end{proof}

\begin{remark} \label{hyp A2}
From Lemma \ref{Qalfan} and Lemma \ref{potencia s} - (ii), it is clear that for $\omega \in \mathcal{W}_{q(\cdot)}$, $\frac{1}{p(\cdot)} := \frac{1}{q(\cdot)} + 
\frac{\alpha}{n}$ and every cube $Q \subset \mathbb{R}^{n}$,  
\[
\| \chi_Q \|_{L^{q(\cdot)}_{\omega}} \lesssim |Q|^{-\alpha/n} \| \chi_Q \|_{L^{p(\cdot)}_{\omega}}.
\]
An open question if the opposite inequality holds. If this is proved, then the condition $A2)$ becomes in a theorem.
\end{remark}

\section{Main result}

In this section we will re-establish the $H^{p(\cdot)}_{\omega}(\mathbb{R}^{n}) \to L^{q(\cdot)}_{\omega}(\mathbb{R}^{n})$ and $H^{p(\cdot)}_{\omega}(\mathbb{R}^{n}) \to H^{q(\cdot)}_{\omega}(\mathbb{R}^{n})$ boundedness for the Riesz potential without taking on the hypothesis $A2)$.

\begin{theorem} \label{Hpw-Lqw and Hpw-Hqw}
Let $0 < \alpha < n$, $q(\cdot) \in \mathcal{P}^{\log}(\mathbb{R}^{n})$ with $0 < q_{-} \leq q_{+} < \infty$, and 
$\omega \in \mathcal{W}_{q(\cdot)}$. If $\frac{1}{p(\cdot)} := \frac{1}{q(\cdot)} + \frac{\alpha}{n}$, then the Riesz potential $I_{\alpha}$ given by (\ref{Ia}) can be extended to a bounded operator $H^{p(\cdot)}_{\omega}(\mathbb{R}^{n}) \to L^{q(\cdot)}_{\omega}(\mathbb{R}^{n})$ and $H^{p(\cdot)}_{\omega}(\mathbb{R}^{n}) \to H^{q(\cdot)}_{\omega}(\mathbb{R}^{n})$.
\end{theorem}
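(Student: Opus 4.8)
The plan is to reduce the boundedness of $I_{\alpha}$ to the two vector-valued inequalities of Lemmas \ref{q0 estimate} and \ref{Qalfan}, using the atomic decomposition of Theorem \ref{w atomic decomp}. First I would fix $f \in H^{p(\cdot)}_{\omega}(\mathbb{R}^{n}) \cap L^{p_0}(\mathbb{R}^{n})$ for a suitable $p_0 > 1$ (to be chosen large, compatibly with $q_0 > \max\{1,q_+\}$ in Lemma \ref{q0 estimate}); by Proposition \ref{dense} this is a dense subclass on which it suffices to prove the estimate. Apply Theorem \ref{w atomic decomp} with $\theta$ chosen appropriately (e.g.\ $\theta = \underline{p}$, or small) to write $f = \sum_j \lambda_j a_j$ with $\omega$-$(p(\cdot),p_0,N)$ atoms $a_j$ supported on cubes $Q_j$, the series converging in $L^{p_0}$, and with the key control
\[
\left\| \sum_{j=1}^{\infty} \left( \frac{|\lambda_j|}{\|\chi_{Q_j}\|_{L^{p(\cdot)}_{\omega}}} \right)^{\theta} \chi_{Q_j} \right\|_{L^{p(\cdot)/\theta}_{\omega^{\theta}}}^{1/\theta} \lesssim \|f\|_{H^{p(\cdot)}_{\omega}}.
\]
Since $I_{\alpha}$ is continuous on $L^{p_0}$ into $L^{q_0}$ (with $\frac1{q_0} = \frac1{p_0} - \frac{\alpha}{n}$, classical Hardy–Littlewood–Sobolev), one has $I_{\alpha}f = \sum_j \lambda_j I_{\alpha}a_j$ with convergence in $L^{q_0}$, hence a.e.\ along a subsequence.

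The core of the argument is a pointwise control of each $I_{\alpha}a_j$ by pieces adapted to the cubes $2^k Q_j$. Using $a_1)$, $a_2)$ and the moment condition $a_3)$ together with the standard Taylor-expansion estimate for the kernel $|x-y|^{\alpha-n}$, one splits $I_{\alpha}a_j = (I_{\alpha}a_j)\chi_{2Q_j} + \sum_{k \geq 1}(I_{\alpha}a_j)\chi_{2^{k+1}Q_j \setminus 2^k Q_j}$ and bounds the far pieces by $C\, 2^{-k(n+N+1-\alpha)} \|\chi_{Q_j}\|_{L^{p(\cdot)}_{\omega}}^{-1} |Q_j|^{\alpha/n} \chi_{2^kQ_j}(x)$ (after a Hölder estimate in $L^{p_0}$ on the local piece and a crude size bound on each annulus), with $N \geq \lfloor n s_{\omega,p(\cdot)} - n\rfloor$ large enough — here Proposition \ref{WqcWp} guarantees $\omega \in \mathcal{W}_{p(\cdot)}$ and gives the needed moment count. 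For the local piece, instead write $|I_{\alpha}a_j|\chi_{2Q_j} = h_j$ with $\supp h_j \subset 2Q_j$, and control $\left(\frac{1}{|2Q_j|}\int_{2Q_j} h_j^{q_0}\right)^{1/q_0} \lesssim |2Q_j|^{-1/q_0}\|I_{\alpha}a_j\|_{L^{q_0}} \lesssim |Q_j|^{\alpha/n}\|\chi_{Q_j}\|_{L^{p(\cdot)}_{\omega}}^{-1}|Q_j|^{-1/q_0+1/q_0}$, i.e.\ $\lesssim |Q_j|^{\alpha/n}\|\chi_{Q_j}\|_{L^{p(\cdot)}_{\omega}}^{-1}$, using $a_2)$ and Hardy–Littlewood–Sobolev on $L^{p_0}$. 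Then
\[
|I_{\alpha}f| \lesssim \sum_{j}\sum_{k\geq 0} 2^{-k\delta}\,|\lambda_j|\,\frac{|Q_j|^{\alpha/n}}{\|\chi_{Q_j}\|_{L^{p(\cdot)}_{\omega}}}\,\chi_{2^kQ_j}
\]
for some $\delta > 0$. Taking $L^{q(\cdot)}_{\omega}$-norms, summing the geometric series in $k$ (using Lemma \ref{2Q} to replace $\|\chi_{2^kQ_j}\|$ bounds and absorbing the polynomial growth in $2^{k\delta}$), and then applying Lemma \ref{Qalfan} with $\lambda_j$ replaced by $|\lambda_j|\|\chi_{Q_j}\|_{L^{p(\cdot)}_{\omega}}^{-1}$ converts the $|Q_j|^{\alpha/n}\chi_{Q_j}$-sum in $L^{q(\cdot)}_{\omega}$ into a $\chi_{Q_j}$-sum in $L^{p(\cdot)}_{\omega}$, which by the atomic norm estimate $(\ref{Hpw norm atomic})$ (with $\theta=1$, or after a standard $\theta$-power/quasi-norm maneuver when $p_-<1$) is $\lesssim \|f\|_{H^{p(\cdot)}_{\omega}}$. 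This yields the $H^{p(\cdot)}_{\omega}\to L^{q(\cdot)}_{\omega}$ bound; extending to all of $H^{p(\cdot)}_{\omega}$ follows by density (Proposition \ref{dense}) and a routine limiting argument, checking that the extension agrees with $I_{\alpha}$ on $\mathcal{S}_0$.

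For the $H^{p(\cdot)}_{\omega}\to H^{q(\cdot)}_{\omega}$ statement, I would instead estimate the grand maximal function $\mathcal{M}_{N'}(I_{\alpha}f)$ with $N' \geq \lfloor n s_{\omega,q(\cdot)} - n\rfloor$. The key observation is that for a single atom $a_j$ on $Q_j$, the function $|Q_j|^{-\alpha/n}\|\chi_{Q_j}\|_{L^{p(\cdot)}_{\omega}}\|\chi_{Q_j}\|_{L^{q(\cdot)}_{\omega}}^{-1} I_{\alpha}a_j$ is (a fixed multiple of) a molecule for $H^{q(\cdot)}_{\omega}$ — this is precisely the Riesz-potential-maps-atoms-to-molecules computation (the moment condition $a_3)$ on $a_j$ gives vanishing moments for $I_{\alpha}a_j$ up to order $N$, and the size/decay estimates above furnish the molecular decay). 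One then estimates $\mathcal{M}_{N'}(I_{\alpha}a_j)(x)$ pointwise by $\sum_{k\geq 0} 2^{-k\delta'}|Q_j|^{-\alpha/n}\|\chi_{Q_j}\|_{L^{p(\cdot)}_{\omega}}\|\chi_{Q_j}\|_{L^{q(\cdot)}_{\omega}}^{-1}\,M(\chi_{Q_j})(x)^{\lambda}$-type terms, or more elementarily by the same annular decomposition $\sum_k 2^{-k\delta'}|Q_j|^{\alpha/n}\|\chi_{Q_j}\|_{L^{p(\cdot)}_{\omega}}^{-1}\chi_{2^kQ_j}$ as before (the grand maximal function of an $L^{q_0}$ function localized near $Q_j$ with the above decay obeys the same bound, possibly after applying Lemma \ref{q0 estimate} to the innermost piece). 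Summing over $j$, taking $L^{q(\cdot)}_{\omega}$-norms, and invoking Lemmas \ref{2Q}, \ref{Qalfan} and the atomic estimate $(\ref{Hpw norm atomic})$ exactly as in the first part gives $\|\mathcal{M}_{N'}(I_{\alpha}f)\|_{L^{q(\cdot)}_{\omega}} \lesssim \|f\|_{H^{p(\cdot)}_{\omega}}$, i.e.\ the desired $H^{q(\cdot)}_{\omega}$ bound, again extended by density.

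The main obstacle I anticipate is the bookkeeping in the annular/molecular estimate: one must pin down the decay exponent $\delta$ (equivalently, the required moment order $N$) so that, after summing the geometric series in $k$ and accounting for the growth of $\|\chi_{2^kQ_j}\|_{L^{q(\cdot)}_{\omega}}/\|\chi_{Q_j}\|_{L^{q(\cdot)}_{\omega}}$ — controlled via iterated application of Lemma \ref{2Q}, which only gives polynomial-in-$2^k$ growth — the series still converges; this forces $N$ large in terms of $s_{\omega,q(\cdot)}$ (via Proposition \ref{WqcWp}), and one has to check this is compatible with Definition \ref{Def Hpw} and Theorem \ref{w atomic decomp}. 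A secondary technical point is the $\theta$-power manipulation needed to pass from $(\ref{Hpw norm atomic})$ (stated for the $L^{p(\cdot)/\theta}_{\omega^{\theta}}$-quasi-norm of the $\theta$-powered sum) to a clean bound on $\|\sum_j |\lambda_j|\|\chi_{Q_j}\|_{L^{p(\cdot)}_{\omega}}^{-1}\chi_{Q_j}\|_{L^{p(\cdot)}_{\omega}}$ when $p_- < 1$; this is handled by choosing $\theta \leq \underline{p}$ and using Lemma \ref{potencia s}-(iv) together with the fact that $\ell^\theta \hookrightarrow \ell^1$ is a contraction, exactly as in \cite{Uribe}. Everything else — the Hardy–Littlewood–Sobolev estimate on $L^{p_0}$, the convergence of the atomic series, the density reduction — is routine.
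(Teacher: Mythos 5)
Your overall architecture --- density via Proposition \ref{dense}, the atomic decomposition of Theorem \ref{w atomic decomp}, Hardy--Littlewood--Sobolev plus Lemma \ref{q0 estimate} plus Lemma \ref{Qalfan} for the piece of $I_\alpha a_j$ living on $2Q_j$, the moment condition and Taylor remainder for the far field, and the $\theta$-power maneuver against (\ref{Hpw norm atomic}) --- matches the paper's, and your local estimate is essentially identical to the paper's bound for the term $U_1$. The gap is in the far-field term. After your annular decomposition you must pass from
\[
\Bigl\| \sum_j c_j\, \chi_{2^kQ_j} \Bigr\|_{L^{p(\cdot)}_{\omega}}
\quad\text{to}\quad
C(k)\, \Bigl\| \sum_j c_j\, \chi_{Q_j} \Bigr\|_{L^{p(\cdot)}_{\omega}}
\]
with $C(k)$ growing slowly enough to be beaten by $2^{-k\delta}$. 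Lemma \ref{2Q} cannot deliver this: it compares $\| \chi_{2Q} \|$ with $\| \chi_Q \|$ for a \emph{single} cube and says nothing about the superposition over $j$, whose overlaps change as the cubes are dilated; iterating it only rescales the coefficients, not the characteristic functions. The step as written would therefore fail. The correct tool is a vector-valued maximal inequality: one writes $\chi_{2^kQ_j} \leq C\, 2^{kns} [M\chi_{Q_j}]^{s}$ and invokes Lemma \ref{Feff-Stein ineq}, which is available in the paper precisely for this purpose.

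The paper in fact short-circuits the annuli altogether: on $(2Q_j)^c$ the Taylor-remainder bound is packaged directly as $|I_\alpha a_j| \lesssim \| \chi_{Q_j} \|_{L^{p(\cdot)}_{\omega}}^{-1} [M_{\alpha/s}(\chi_{Q_j})]^{s}$ with $s = (n+N+1)/n$, and a single application of Lemma \ref{Feff-Stein ineq} (with $u=s$) both absorbs the dilations and supplies the $L^{q(\cdot)}_{\omega} \to L^{p(\cdot)}_{\omega}$ gain, so Lemma \ref{Qalfan} is not even needed for the tail; the requirement $s > s_{\omega,\,q(\cdot)} + \alpha/n$ is exactly what pins down the moment count $N = \lfloor n s_{\omega,\,q(\cdot)} + \alpha - n \rfloor$, i.e.\ the ``bookkeeping'' you flag as your anticipated obstacle. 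The same repair is needed in your grand-maximal estimate for the $H^{p(\cdot)}_{\omega} \to H^{q(\cdot)}_{\omega}$ part; note also that the paper deliberately avoids the molecular route you sketch there (that is the approach of \cite{Rocha5} this note is replacing) and instead obtains $t$-uniform Taylor bounds for $K_\alpha \ast \phi_t$ from \cite[Lemma 6.9]{Foll} and re-runs the $U_2$ argument verbatim.
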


\begin{proof}
We first prove that the operator $I_{\alpha}$ can be extended to a bounded operator 
$H^{p(\cdot)}_{\omega}(\mathbb{R}^{n}) \to L^{q(\cdot)}_{\omega}(\mathbb{R}^{n})$. Given $\omega \in \mathcal{W}_{q(\cdot)}$, by 
Definition \ref{pesos Wp}, there exists $0 < \theta < 1$ such that $\frac{1}{\theta} \in \mathbb{S}_{\omega, \, q(\cdot)}$. Now, we 
take $q_0 > \max \{ \frac{n}{n - \alpha}, q_{+} \}$ and define $\frac{1}{p_0} := \frac{1}{q_0} + \frac{\alpha}{n}$. By Proposition 
\ref{WqcWp}, for $\frac{1}{p(\cdot)} = \frac{1}{q(\cdot)} + \frac{\alpha}{n}$, we have that $\mathcal{W}_{q(\cdot)} \subset 
\mathcal{W}_{p(\cdot)}$ and $s_{\omega, \, p(\cdot)} \leq s_{\omega, \, q(\cdot)} + \frac{\alpha}{n}$. So, given 
$f \in S_{0}(\mathbb{R}^{n})$, from Theorem \ref{w atomic decomp} and since one can always choose atoms with additional vanishing moment, we have that there exist a sequence of real numbers $\{\lambda_j\}_{j=1}^{\infty}$, a sequence of cubes $Q_j = Q(z_j,r_j)$ centered at $z_j$ with side length $r_j$ and $\omega - (p(\cdot), p_0, \lfloor n s_{\omega, \, q(\cdot)} + \alpha - n \rfloor)$ atoms $a_j$ supported on $Q_j$, satisfying
\begin{equation} \label{ineq Hpw}
\left\| \sum_{j} \left( \frac{|\lambda_j |}{\| \chi_{Q_j} \|_{L^{p(\cdot)}_{\omega}}} \right)^{\theta} \chi_{Q_j} \right\|_{L^{p(\cdot)/\theta}_{\omega^{\theta}}}^{1/\theta} \lesssim \| f \|_{H^{p(\cdot)}_{\omega}},
\end{equation}
and $f = \sum_{j} \lambda_j a_j$ converges in $L^{p_0}(\mathbb{R}^{n})$. By Sobolev's Theorem we have that 
$I_{\alpha }$ is bounded from  $L^{p_{0}}\left( \mathbb{R}^{n}\right) $ into $L^{q_{0}}\left( \mathbb{R}^{n}\right)$, so
\[
|I_{\alpha}f(x)| \leq \sum_{j} |\lambda_j| |I_{\alpha}a_j(x)|, \,\,\,\,\, \textit{a.e.} \, x \in \mathbb{R}^{n}.
\]
Then,
\begin{equation} \label{U1_U2}
\| I_{\alpha}f \|_{L^{q(\cdot)}_{\omega}} \lesssim \left\| \sum_{j} |\lambda_j| \, \chi_{2 Q_j} \cdot I_{\alpha} a_j 
\right\|_{L^{q(\cdot)}_{\omega}} + \left\| \sum_{j} |\lambda_j| \, \chi_{\mathbb{R}^{n} \setminus 2 Q_j} \cdot I_{\alpha} a_j 
\right\|_{L^{q(\cdot)}_{\omega}} = U_1 + U_2,
\end{equation}
where $2Q_j = Q(z_j, 2r_j)$. To estimate $U_1$, we first apply Sobolev's theorem to the expression 
$\chi_{2 Q_j} \cdot I_{\alpha} a_j $ followed by Lemma \ref{2Q} and obtain
\[
\left\| I_{\alpha} a_j \right\|_{L^{q_{0}}(2Q_{j})} \lesssim \left\| a_j \right\|_{L^{p_{0}}} \lesssim 
\frac{| Q_j |^{\frac{1}{p_{0}}}}{\left\| \chi _{Q_j }\right\|_{L^{p(\cdot)}_{\omega}}} \lesssim 
\frac{\left| 2Q_{j} \right|^{\frac{1}{q_{0}}+\frac{\alpha}{n}}}{\left\| \chi_{2Q_{j}} \right\|_{L^{p(\cdot)}_{\omega}}},
\]
so
\begin{equation} \label{average}
\left( \frac{1}{|2Q_{j}|} \int_{2Q_j} |I_{\alpha} a_j|^{q_0}  \right)^{1/q_0} \lesssim 
\frac{\left| 2Q_{j} \right|^{\frac{\alpha}{n}}}{\left\| \chi_{2Q_{j}} \right\|_{L^{p(\cdot)}_{\omega}}}.
\end{equation}
Now, Lemma \ref{q0 estimate}, (\ref{average}), Lemma \ref{Qalfan}, Lemma \ref{2Q}, $0 < \theta < 1$ and (\ref{ineq Hpw})  lead to
\begin{equation} \label{U1}
U_1 = \left\| \sum_{j} |\lambda_j| \, \chi_{2 Q_j} \cdot I_{\alpha} a_j  \right\|_{L^{q(\cdot)}_{\omega}} \lesssim 
\left\| \sum_{j} |\lambda_j| \left( \frac{1}{|2Q_{j}|} \int_{2Q_j} |I_{\alpha} a_j|^{q_0}  \right)^{1/q_0} \chi_{2 Q_j} 
\right\|_{L^{q(\cdot)}_{\omega}} 
\end{equation}
\[
\lesssim \left\| \sum_{j} |\lambda_j| \frac{\left| 2Q_{j} \right|^{\frac{\alpha}{n}}}{\left\| \chi_{2Q_{j}} \right\|_{L^{p(\cdot)}_{\omega}}} \chi_{2 Q_j} \right\|_{L^{q(\cdot)}_{\omega}} \lesssim
\left\| \sum_{j}  \frac{|\lambda_j|}{\left\| \chi_{2Q_{j}} \right\|_{L^{p(\cdot)}_{\omega}}} \chi_{2 Q_j} \right\|_{L^{p(\cdot)}_{\omega}}
\]
\[
\lesssim \left\| \sum_{j} \left( \frac{|\lambda_j |}{\| \chi_{Q_j} \|_{L^{p(\cdot)}_{\omega}}} \right)^{\theta} \chi_{Q_j} 
\right\|_{L^{p(\cdot)/\theta}_{\omega^{\theta}}}^{1/\theta} \lesssim \| f \|_{H^{p(\cdot)}_{\omega}}.
\]
To estimate $U_2$, let $N := \lfloor n s_{\omega, \, q(\cdot)} + \alpha - n \rfloor$, and let $a_j(\cdot)$ be a 
$\omega - (p(\cdot), p_0, N)$ atom supported on the cube $Q_j = Q(z_j, r_j)$. In view of the moment condition $a3)$ of $a_j(\cdot)$, we obtain
\[
I_{\alpha}a_j(x) = \int_{Q_j} \left( |x-y|^{\alpha- n} - q_{N}(x,y) \right) a_j(y) dy, \,\,\,\,\,\, \textit{for all} \,\,\, x \notin 2Q_j,
\]
where $q_{N}$ is the degree $N$ Taylor polynomial of the function $y \rightarrow |x-y|^{\alpha - n}$ expanded around $z_j$. By the standard estimate of the remainder term of the Taylor expansion, for any $y  \in Q_j$ and any $x \notin 2Q_j$, we get
\[
\left||x-y|^{\alpha- n} - q_{N}(x,y) \right| \leq C r^{N +1}_j |x - z_j|^{-n+ \alpha -N -1},
\]
this inequality and the condition $a_2)$ of the atom $a(\cdot)$ allow us to conclude that
\begin{equation} \label{Ialfa}
|I_{\alpha}a_j(x) | \lesssim \frac{r^{n+N+1}_j}{\| \chi_{Q_j} \|_{L^{p(\cdot)}_\omega}} |x-z_j|^{-n + \alpha - N -1}
\lesssim \frac{\left[ M_{\frac{\alpha n}{n+N+1}}(\chi_{Q_j}) (x) \right]^{\frac{n+N+1}{n}}}{\| \chi_{Q_j} \|_{L^{p(\cdot)}_\omega}},
\end{equation}
for all $x \notin 2Q_j$. Putting $s= \frac{n+N+1}{n}$, (\ref{Ialfa}) leads to
\[
U_2 \lesssim \left\| \left\{ \sum_j  \frac{|\lambda_j|}{\| \chi_{Q_j} \|_{L^{p(\cdot)}_\omega}} \left[ M_{\frac{\alpha}{s}}(\chi_{Q_j}) \right]^{s} \right\}^{1/s} \right\|^{s}_{L^{sq(\cdot)}_{\omega^{1/s}}}. 
\]
Since
\[
s = \frac{n + \lfloor n s_{\omega, \, q(\cdot)} + \alpha - n \rfloor + 1}{n} > s_{\omega, \, q(\cdot)} + \frac{\alpha}{n},
\]
to apply Lemma \ref{Feff-Stein ineq}, with $u=s$, we obtain
\[
U_2 \lesssim \left\| \left\{ \sum_j  \frac{|\lambda_j|}{\| \chi_{Q_j} \|_{L^{p(\cdot)}_\omega}} \chi_{Q_j} \right\}^{1/s} 
\right\|^{s}_{L^{sp(\cdot)}_{\omega^{1/s}}} = \left\| \sum_j \frac{|\lambda_j|}{\| \chi_{Q_j} \|_{L^{p(\cdot)}_\omega}} \chi_{Q_j}
\right\|_{L^{p(\cdot)}_{\omega}}. 
\]
Being $0 < \theta < 1$, the $\theta$-inequality and (\ref{ineq Hpw}) give
\begin{equation} \label{U2}
U_2 \lesssim \left\| \sum_j \left(\frac{|\lambda_j|}{\| \chi_{Q_j} \|_{L^{p(\cdot)}_\omega}} \right)^{\theta} \chi_{Q_j}
\right\|^{1/\theta}_{L^{p(\cdot)/\theta}_{\omega^{\theta}}} \lesssim \| f \|_{H^{p(\cdot)}_{\omega}}.
\end{equation}
Hence, (\ref{U1_U2}), (\ref{U1}) and (\ref{U2}) yield
\[
\| I_{\alpha} f \|_{L^{q(\cdot)}_{\omega}} \lesssim \| f \|_{H^{p(\cdot)}_{\omega}}, \,\,\, \text{for all} \,\, 
f \in \mathcal{S}_{0}(\mathbb{R}^{n}).
\]
Then, by Propositions \ref{WqcWp} and \ref{dense}, it follows that $I_{\alpha}$ extends to a bounded operator 
$H^{p(\cdot)}_{\omega}(\mathbb{R}^{n}) \to L^{q(\cdot)}_{\omega}(\mathbb{R}^{n})$.

Now, by using Definition \ref{Def Hpw}, we will prove that the Riesz potential $I_{\alpha}$ extends to a bounded operator $H^{p(\cdot)}_{\omega}(\mathbb{R}^{n}) \to H^{q(\cdot)}_{\omega}(\mathbb{R}^{n})$. For them, we consider $N :=  \lfloor n s_{\omega, \, q(\cdot)} + \alpha - n \rfloor$. Since $I_{\alpha}$ is bounded from $L^{p_0}(\mathbb{R}^{n})$ into $L^{q_0}(\mathbb{R}^{n})$, $H^{q_0}(\mathbb{R}^{n}) \equiv L^{q_0}(\mathbb{R}^{n})$ with comparable norms, and $I_{\alpha}f = \sum_{j} \lambda_j I_{\alpha}a_j$ converges in $L^{q_0}(\mathbb{R}^{n})$, it follows that
\[
\mathcal{M}_{N}(I_{\alpha} f)(x) \leq \sum_{j=1}^{\infty} |\lambda_j| \mathcal{M}_{N}(I_{\alpha} a_j)(x), \,\,\, a.e. \,\, x \in \mathbb{R}^{n}.
\]
Then,
\[
\| I_{\alpha} f \|_{H^{q(\cdot)}_{\omega}} = \| \mathcal{M}_N(I_{\alpha} f) \|_{L^{q(\cdot)}_{\omega}} \leq \left\| \sum_{j} |\lambda_j| \chi_{2 Q_{j}} \mathcal{M}_N(I_{\alpha} a_j) \right\|_{L^{q(\cdot)}_{\omega}}  
\]
\[
+ 
\left\| \sum_j |\lambda_j| \chi_{\mathbb{R}^{n} \setminus 2 Q_{j}} \mathcal{M}_N(T_{\alpha} a_j) \right\|_{L^{q(\cdot)}_{\omega}} =: V_1 + V_2.
\]
To estimate $V_1$, we observe, by Sobolev's Theorem and Lemma \ref{2Q}, that 
\[
\left\| \mathcal{M}_N (I_{\alpha} a_j) \right\|_{L^{q_{0}}(2Q_{j})} \lesssim 
\left\| I_{\alpha} a_j \right\|_{L^{q_{0}}} \lesssim \left\| a_j \right\|_{L^{p_{0}}}
\lesssim 
\frac{| Q_j |^{\frac{1}{p_{0}}}}{\left\| \chi _{Q_j }\right\|_{L^{p(\cdot)}_{\omega}}} \lesssim 
\frac{\left| 2Q_{j} \right|^{\frac{1}{q_{0}}+\frac{\alpha}{n}}}{\left\| \chi_{2Q_{j}} \right\|_{L^{p(\cdot)}_{\omega}}}.
\]
Then, by proceeding as above in the estimate of $U_1$, we get
\[
V_1 \lesssim \left\| \sum_{j=1}^{\infty} \left( \frac{|\lambda_j |}{\| \chi_{Q_j} \|_{L^{p(\cdot)}_{\omega}}} \right)^{\theta} \chi_{Q_j} \right\|_{L^{p(\cdot)/\theta}_{\omega^{\theta}}}^{1/\theta} \lesssim \| f \|_{H^{p(\cdot)}_{\omega}}.
\]
Now, we estimate $V_2$. We put $K_{\alpha}(y)=|y|^{\alpha - n}$ and consider $\phi \in \mathcal{S}(\mathbb{H}^{n})$ with $\| \phi \|_{\mathcal{S}(\mathbb{H}^{n}), \, N} \leq 1$. Then, for $x \notin 2 Q_j$ and every $t > 0$, by the moment condition $a3)$ of the atoms, we have
\[
((I_{\alpha} a_j) \ast \phi_t)(x) = \int_{Q_j} a_j(y) (K_{\alpha} \ast \phi_t)(x-y) \, dy  
\]
\[
= \int_{Q_j} a_j(y) \left[(K_{\alpha} \ast \phi_t)(x-y) - q_{x, t}(y) \right] \, dy, 
\] 
where $y \to q_{x, t}(y)$ is the Taylor polynomial of the function $y \to (K_{\alpha} \ast \phi_t)(x-y)$ at 
$z_j$ of degree $N$. To apply \cite[Lemma 6.9]{Foll} with $G=\mathbb{R}^n$, $r = N +1$ and $K = K_{\alpha}$, we have
\[
|\partial^{\beta}(K_{\alpha} \ast \phi_t)(u)| = |((\partial^{\beta} K_{\alpha}) \ast \phi_t)(u)|
\lesssim |u|^{\alpha - n - |\beta|},
\]
for all $u \neq 0$, $t > 0$ and $|\beta| \leq N+1$. Then, by the standard estimate of the remainder term of the Taylor expansion, for any 
$y  \in Q_j$ and any $x \notin 2Q_j$, we obtain
\[
\left| (K_{\alpha} \ast \phi_t)(x-y) - q_{x, t}(y) \right| \lesssim r^{N+1}_j  |x-z_j|^{-n+ \alpha - N -1}.
\]
This estimate does not depend on $t$. Finally, according to the ideas to estimate $U_2$ above and taking the supremum on $t>0$ and $\phi \in \mathcal{F}_N$, we obtain
\[
V_2 \lesssim \left\| \sum_{j=1}^{\infty} \left( \frac{|\lambda_j |}{\| \chi_{Q_j} \|_{L^{p(\cdot)}_{\omega}}} \right)^{\theta} \chi_{Q_j} \right\|_{L^{p(\cdot)/\theta}_{\omega^{\theta}}}^{1/\theta} \lesssim \| f \|_{H^{p(\cdot)}_{\omega}},
\]
for all $f \in \mathcal{S}_{0}(\mathbb{R}^{n})$. Thus, by Propositions \ref{WqcWp} and \ref{dense}, $I_{\alpha}$ extends to a bounded operator 
$H^{p(\cdot)}_{\omega}(\mathbb{R}^{n}) \to H^{q(\cdot)}_{\omega}(\mathbb{R}^{n})$.
\end{proof}


\end{document}